\title[Fast adjoint differentiation of chaos] 
{
Fast adjoint differentiation of chaos via computing unstable perturbations of transfer operators 
}
\begin{document}

\begin{abstract}

We devise the fast adjoint response algorithm for the gradient of physical measures (long-time-average statistics) of discrete-time hyperbolic chaos with respect to many system parameters. Its cost is independent of the number of parameters. 
The algorithm transforms our new theoretical tools, the adjoint shadowing lemma and the equivariant divergence formula  \cite{Ni_asl,TrsfOprt}, into the form of progressively computing $u$ many bounded vectors on one orbit. Here $u$ is the unstable dimension.
We demonstrate our algorithm on an example difficult for previous methods, a system with random noise, and a system of a discontinuous map.
We also give a short formal proof of the equivariant divergence formula.

Compared to the better-known finite-element method, our algorithm is not cursed by dimensionality of the phase space (typical real-life systems have very high dimensions), since it samples by one orbit.
Compared to the ensemble/stochastic method, our algorithm is not cursed by the butterfly effect, since the recursive relations in our algorithm is bounded.

\smallskip
\noindent \textbf{Keywords.}
Adjoint method, 
backpropagation,
chaos, 
linear response,
nonintrusive shadowing, 
fast response algorithm.  

\smallskip
\noindent \textbf{AMS subject classification numbers.}
37M25, % Computational methods for ergodic theory (approximation of invariant measures, computation of Lyapunov exponents, entropy, etc.)
37M21, % Computational methods for invariant manifolds of dynamical systems
65D25, % Numerical differentiation
65P99, % Numerical problems in dynamical systems
90C31, % Sensitivity, stability, parametric optimization
49Q12, % Sensitivity analysis for optimization problems on manifolds
37C40, % Smooth ergodic theory, invariant measures for smooth dynamical systems
47A05. % 47-XX Operator theory - General (adjoints, conjugates, products, inverses, domains, ranges, etc.)
\end{abstract}

\maketitle

\section{Introduction}

\subsection{Literature review}
\hfill
\vspace{0.1in}

% Sensitivity is defined and is useful % review linear response formula
The long-time-average statistic of a chaotic system is given by the physical measure, or the SRB measure, which is a model for fractal limiting invariant measures of chaos \cite{young2002srb,srbmap,srbflow}.
We are interested in the derivative of the long-time-average of an objective function with respect to the parameters of the system.
This derivative is also called the linear response or sensitivity, and it is one of the most basic numerical tools for many purposes.
For example, in aerospace design, we want to know the how small changes in the geometry would affect the average lift of an aircraft.
In climate change, we want to ask what is the temperature change caused by a small change of $CO_2$ level.
In machine learning, we want to extend the conventional backpropagation method to cases with gradient explosion; current practices have to avoid gradient explosion but that is not always achievable \cite{clip_gradients2,resnet}.
These are all linear responses, and an efficient algorithm would help the advance of the above fields.

% direct computation is difficult
There are several different formulas for the linear response; the most well-known are the ensemble formula and the derivative operator formula, and they are formally equivalent under integration-by-parts \cite{Gallavotti1996,Ruelle_diff_maps,Ruelle_diff_maps_erratum,Ruelle_diff_flow,Dolgopyat2004,Gouezel2006,Baladi2007}.
The ensemble formula is the average of the perturbation of individual orbits, which grows exponentially fast, requiring vast number of samples to take the average \cite{Lea2000,eyink2004ruelle,lucarini_linear_response_climate}.
The operator formula involves the derivative of the transfer operator, which is typically not defined pointwise.
The current operator methods typically approximates SRB measures by finite-elements: this is very inefficient for real-life systems which typically have very high dimensions \cite{Bahsoun2018,Galatolo2014,Antown2021a,Wormell2019a,TrsfOprt}.
A promising direction is to `blend' two formulas, and use the operator formula only for the unstable perturbations: this was attempted by Abramov and Majda, but they used a degenerate finite-element method for the unstable, so the cost is still cursed by dimensionality \cite{Abramov2008}.

% decompose linear response
To summarize, previous numerical algorithms are not suitable to be computed on an orbit, which is the most efficient way to sample a physical measure in high-dimensional phase spaces.
Our recent series of works devise new linear response formulas that are bounded on an orbit.
In this paper, we further transform the adjoint formula to one in the form of progressively computing $2u$ many recursive relations on an orbit, where $u$ is the unstable dimension.
Recursive adjoint linear response formulas were not provided by previous works, not to mention such a small number of recursions.

% intuition of shadowing, review of NILSS
We decompose the linear response into shadowing and unstable contributions.
The shadowing contribution is given by the shadowing vector, which is the difference between two orbits close to each other but with perturbed parameters \cite{Bowen_shadowing,Pilyugin_shadow_linear_formula}.
For the purpose of developing an adjoint algorithm, we use our adjoint shadowing lemma, so that the main operator is moved away from the perturbation of systems \cite{Ni_asl,Ni_adjoint_shadowing}. 
The nonintrusive shadowing algorithm, which uses only the backward-running adjoint solutions, has been applied to computational fluid problems with over $3\times 10^6$ dimensions; its cost is independent of the number of parameters of the system \cite{Ni_nilsas,Ni_NILSS_AIAA_2016,Ni_NILSS_JCP,Ni_fdNILSS,Ni_CLV_cylinder,Blonigan_2017_adjoint_NILSS,Ni_NILSS_AIAA_2016}.
Shadowing does not give the full linear response; but when the ratio of unstable directions is low, shadowing can be a good approximation \cite{Ruesha}.

% for the unstable contribution
For the unstable contribution, recently, we derived a fast formula and the (tangent) fast response algorithm in the form of renormalized second-order tangent equations \cite{fr}.
The first issue of our previous work is that no intermediate quantity has a concrete physical meaning.
The second issue is that the cost is linear to the number of parameters, which can be very large for many real-life applications.
To address these issues, we recently developed the adjoint theory for the unstable contribution.
We showed that unstable contribution is given by the unstable perturbation of transfer operators, then we give a formula by equivariant divergences \cite{TrsfOprt}.
However, it is not yet clear that the equivariant divergence formula can be computed efficiently on an orbit with the fewest number of recursive relations.
This is the main goal of our current paper.

\subsection{Main results}
\hfill\vspace{0.1in}

An efficient and precise adjoint algorithm for chaos is an important open problem, and this paper gives an answer for an important basic scenario, the discrete-time systems with hyperbolicity.
The core of this paper is computing the equivariant divergence formula.
Since we are sampling the linear response on one orbit, in higher dimensions, our algorithm is much faster than previous adjoint methods based on integrations over the phase space.

We first give a short but only formal proof of the equivariant divergence formula for $\delta^u \tT^u\sigma$, the unstable perturbation of transfer operators on the unstable manifold.
\[ \begin{split}
  - \frac{\delta^u \tT^u\sigma} \sigma
  = \frac{\cL_{X^u} \sigma} \sigma
  = \cS(\div^v f_*) X + \div^v X.
\end{split} \]
Here $\cL$ is the Lie derivative of measures; $\div^v$ is the contraction by the unit unstable cube and its co-cube; $\cS$ is the adjoint shadowing operator (section~\ref{s:as} and section~\ref{s:UC}).
The main idea for this proof is to use the relation between Lie and Riemannian derivative.
Two other proofs, written more carefully, are given in \cite{TrsfOprt}, one is longer but more intuitive, the other is based on our forward tangent formula.

Then we derive the fast adjoint response algorithm, which computes the formula with the least possible cost.
The main difficulty is that the equivariant divergences are defined by unit $u$-vectors and $u$-covectors.
Renormalizing a $u$-vector has cost $O(Mu^2)$, which is the leading cost.
To resolve this, we derive a version of the formula that only needs renormalization occasionally.
With this, we can break a long orbit into small segments, and renormalize at interfaces.
We shall also write all our theories into matrix notations for the convenience of computations.

Since our adjoint formulas separate major computations away from $X$, the cost of a new $X$ is negligible.
In fact, the cost for the unstable contribution is further independent of the number of objectives.
Our algorithm samples the linear response by $2u+2$ recursive relations on a orbit, which can be viewed as a generalization of the MCMC method to derivatives of physical measures.
On our numerical examples in a 21-dimensional phase space, the cost of fast adjoint response is similar to finite-differences, but is almost unaffected by the number of parameters: 
this is orders of magnitude more efficient than previous precise adjoint algorithms.
We also study the convergence of the algorithm.

We also test the fast adjoint response algorithm on cases beyond the assumptions we proved the formulas.
Our algorithm works in the presence of stochastic forcing; it also works when the system is moderately discontinuous.
It seems to have robustness when there are no very strict center directions or when the center direction is not excited.
We treat flow cases in \cite{Ni_asl} and an incoming paper.

% structure of paper
This paper is organized as follows.
Section~\ref{s:prelim} reviews the linear response theory and the adjoint shadowing lemma.
Section \ref{s:UC} gives a short formal proof of the equivariant divergence formula for the unstable contribution.
Section \ref{s:algorithm} develops the fast adjoint response algorithm.
Section \ref{s:examples} illustrates the algorithm on several numerical examples.

\section{Preliminaries}
\label{s:prelim}

\subsection{Hyperbolicity and linear response}
\hfill\vspace{0.1in}

Let $f$ be a smooth invertible map on a smooth Riemannian manifold $\cM$ of dimension $M$.
Assume that $K$ is a hyperbolic compact invariant set, that is, its tangent space has a continuous $f$-equivariant splitting into stable and unstable subspaces, $T_KM = V^s \bigoplus V^u$,
such that there are constants $C>0$, $0<\lambda < 1$, and
\[
  \max_{x\in K}|f_* ^{-n}|V^u(x)| ,
  |f_* ^{n}|V^s(x)| \le C\lambda ^{n} \quad \textnormal{for  } n\ge 0.
\]
Here $f_*$ is the pushforward operator on vectors,
when $\cM=\R^M$, $f_*$ is represented by multiplying with matrix $\partial f/\partial x$ on the left.
Define the oblique projection operators $P^u$ and $P^s$, such that
\[ \begin{split}
  v = P^u v + P^s v, \quad \textnormal{where} \quad P^u v\in V^u, P^s v\in V^s.
\end{split} \]
We further assume that $K$ is an attractor, that is, there is an open neighborhood $U$, called the basin of the attractor, such that $\cap_{n\ge0} f^nU=K$.

% Axiom A attractor
With some more assumptions, the attractor admits SRB measures, denoted by $\rho$.
In this paper, we define SRB measures as physical measures, that is, for every continuous observable $\Phi:\cM \rightarrow \R$ and almost all $x\in U$
\[ \begin{split}
  \lim_{N\rightarrow \infty} \frac 1N \sum _{k=0} ^{N-1} \Phi (f^kx) 
  = \rho(\Phi).
\end{split} \]
In other words, SRB measures give the long-time-average statistic of the chaotic system \cite{young2002srb,Ruelle_diff_maps}.

Assume that the system is parameterized by some parameter, $\gamma$; the linear response formula is an expression of $\delta \rho$ by $\delta f$, where
\[ \begin{split}
  \delta(\cdot):=\partial (\cdot)/\partial \gamma
\end{split} \]
may as well be regarded as small perturbations.
Linear response formulas are proved to give the correct derivative for various hyperbolic systems \cite{Ruelle_diff_maps,Dolgopyat2004,Jiang06}, yet it fails for some cases \cite{Baladi2007,Wormell2019,Gottwald2016}.
For an axiom A attractor $K$, the derivative of the SRB measure is given by:
  \[
    \delta \rho (\Phi)
    = \lim_{W\rightarrow\infty} \rho\left( \sum_{n=0}^W X(\Phi_n) \right) ,
  \]
where $X:=\delta f\circ f^{-1}$, $X(\cdot)$ is to differentiate in the direction of $X$, and $\Phi\in C^2$ is a fixed observable function.
Here $\Phi_n = \Phi\circ f^n$.

We call this the ensemble formula for the linear response.
There are several formulas expressing the same derivative in different ways.
Note that the integrand in this formula grows exponentially fast to $W$, and directly evaluating this integrand via ensemble or stochastic approaches is extremely expensive.
For faster computation, we decompose the linear response formula into two parts,
\[ \begin{split} \label{e:ruelle22}
  \delta \rho (\Phi)  = SC - UC ,
\end{split} \]
which we call the shadowing and the unstable contribution.
Like a Leibniz rule, the shadowing contribution accounts for the change of the location of the attractor,
while the unstable contribution accounts for the change of the measure if the attractor is fixed.

\subsection{Adjoint shadowing lemma and algorithm}
\hfill\vspace{0.1in}
\label{s:as}

% what is tangent and adjoint
We review our shadowing lemma and nonintrusive shadowing algorithm for the shadowing contribution.
First, the system of pulling-back covectors is also hyperbolic.
The pullback operator $f^*$ is the `adjoint', or basically the transpose, of the Jacobian $f_*$. That is, for any $x\in\cM$, $w\in T_{x}\cM$, and covector $\eta \in T^*_{fx}\cM$, 
\[ \begin{split}
  \eta(f_{*} w) =  f^* \eta (w).
\end{split} \]
On $\R^M$, covectors are inner-products with column vectors; however, they are in fact linear functions on vectors.
Covectors do not `point' to a direction; rather, they are more like gradients of functions.
Moreover, define the norm on covectors
\[ \begin{split}
  |\eta| = \sup_w \eta(w)/ |w|.
\end{split} \]
Let $\cX^{*\alpha}$ denote the space of Holder covector fields.

Define the adjoint projection operator, $\cP^u$ and $\cP^s$, as the adjoint of $P^u$ and $P^s$; in $\R^M$ these are just transpose matrices.
Hence, for any $w\in T_{x}\cM$, $\eta \in T^*_{x}\cM$,
\[ \begin{split}
  \eta (P^u w) = \cP^u \eta(w),
  \quad \textnormal{} \quad 
  \eta (P^s w) = \cP^s \eta(w).
\end{split} \]
Define the image space of $\cP^u$ and $\cP^s$ as $V^{u*}$ and $V^{s*}$; we can show that they are in fact unstable and stable subspaces for the adjoint system.
Hence, if we pullback $u$-many covectors for many steps, they automatically occupy the unstable subspace, since their unstable parts grow while stable parts decay.

\begin{theorem} [adjoint shadowing lemma for discrete-time \cite{Ni_asl}] \label{t:AS}
  The adjoint shadowing operator $\cS$ is equivalently defined by the following characterizations:
  \begin{enumerate}
  \item 
  $\cS$ is the linear operator $\cS:\cX^{*\alpha}(K) \rightarrow \cX^{*\alpha} (K)$, such that
  \[ \begin{split}
    \rho(\om S(X)) = \rho(\cS(\om) X)
    \quad \textnormal{for any } X\in\cX^\alpha(K).
  \end{split} \]
  Hence, let $\nu=\cS(d\Phi)$, then the shadowing contribution is
  \[ \begin{split}
    SC = \rho(d\Phi S(X)) = \rho(\cS(d\Phi) X)
    =  \lim_{N\rightarrow \infty} \frac 1{N} \sum_{n=1}^N  \nu_n X_n \,.
  \end{split} \]
  \item 
  $\cS(\om)$ has the expansion formula given by a `split-propagate' scheme,
  \begin{equation*}
    \cS ( \om ) := \sum_{n\ge 0} f^{*n} \cP^s \om_n
    -  \sum_{n\le -1}  f^{*n} \cP^u \om_n\,.
  \end{equation*}
  \item 
  The shadowing covector $\nu=\cS(\om)$ is the unique bounded solution of the inhomogeneous adjoint equation,
  \end{enumerate}
  \[ \begin{split}
  \nu = f^* \nu_1 + \om, 
  \quad \textnormal{where} \quad \nu_1:=\nu \circ f.
  \end{split} \]
\end{theorem}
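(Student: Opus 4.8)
The plan is to show that the three characterizations pin down the same operator by the cycle: build $\cS(\om)$ from the explicit series in characterization~(2), verify it solves the inhomogeneous adjoint equation of characterization~(3), prove that bounded solutions of that equation are unique, and finally recover the duality of characterization~(1) by a telescoping identity against the invariant measure. Characterization~(1) pins down $\cS(\om)$ only $\rho$-almost everywhere, since the pairing $X\mapsto\rho(\eta(X))$ separates continuous covector fields; boundedness and Hölder regularity, which single out the representative, come from~(2).

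First I would establish that the split-propagate series converges to a bounded Hölder covector field. The key point is that $\cP^s\om_n$ lies in the adjoint-stable subspace $V^{s*}$, which annihilates $V^u$; since $f^*$ is the transpose of $f_*$, pulling such a covector back $n$ steps contracts its norm at rate $\lambda^n$, because equivariance of the splitting gives $(f^n)^*\cP^s\om_n(w)=\cP^s\om_n\big((f^n)_*P^sw\big)$ and only the exponentially contracted stable part survives the pairing. The same estimate applied to $\cP^u\om_n\in V^{u*}$ under backward pullback controls the second sum, so both series converge geometrically and uniformly on $K$. The only genuinely analytic input here is the Hölder continuity of the splitting $x\mapsto V^{s*}(x)\oplus V^{u*}(x)$, hence of $\cP^s,\cP^u$, which is standard for hyperbolic sets and is what places the sum in $\cX^{*\alpha}$.

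Next I would verify characterization~(3). Writing the series at the shifted base point, applying $f^*$, and reindexing, the two sums reproduce themselves except for their $n=0$ terms; using $\cP^s+\cP^u=\mathrm{Id}$ these combine to exactly $\om$, giving $\nu=f^*\nu_1+\om$. For uniqueness, if $\phi$ is a bounded solution of the homogeneous equation $\phi=f^*\phi_1$, then $\phi=(f^n)^*\phi_n$ for all $n\ge0$; projecting onto $V^{s*}$ and letting $n\to+\infty$ forces $\cP^s\phi=0$ by forward contraction, while writing $\phi=(f^{-n})^*\phi_{-n}$ and projecting onto $V^{u*}$ as $n\to+\infty$ forces $\cP^u\phi=0$ by backward contraction, so $\phi\equiv0$. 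Hence the series of~(2) is \emph{the} unique bounded solution, identifying~(2) with~(3). For characterization~(1), let $v=S(X)$ be the bounded solution of the tangent equation $v_1=f_*v+X_1$ from the primal shadowing lemma and set $\nu=\cS(\om)$. Substituting the adjoint equation into $g_n:=\nu_n(v_n)$, using $(f^*\nu_{n+1})(v_n)=\nu_{n+1}(f_*v_n)$ and then $f_*v_n=v_{n+1}-X_{n+1}$, yields the telescoping identity $\om_n(v_n)=g_n-g_{n+1}+\nu_{n+1}(X_{n+1})$. Averaging against $\rho$ and using its $f$-invariance kills the telescoped term and gives $\rho(\om(S(X)))=\rho(\cS(\om)(X))$; the Birkhoff/physical-measure limit applied to $(\nu X)\circ f^n=\nu_nX_n$ then produces the stated expression for $SC$.

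The main obstacle is the first step. Hyperbolicity delivers the pointwise geometric decay of each term immediately, but the invariant splitting of a hyperbolic set is in general only Hölder continuous, so establishing convergence in the $\cX^{*\alpha}$ norm requires bounding the Hölder seminorm of $(f^n)^*\cP^s\om_n$ uniformly in $n$. This means balancing the geometric decay $\lambda^n$ of the norm against the possible geometric growth of the Hölder constants coming from differentiating the projection and the $n$-step cocycle, which succeeds only under a gap condition relating $\lambda$ to $\alpha$. Once this is secured and $\cS$ is known to map $\cX^{*\alpha}$ into itself, the index-shift, uniqueness, and telescoping steps are essentially formal.
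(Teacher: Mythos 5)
This theorem is quoted as a preliminary from \cite{Ni_asl}; the present paper contains no proof of it, so there is nothing here to diverge from. Your reconstruction --- building $\cS(\om)$ from the split-propagate series with geometric convergence from the contraction of $V^{s*}$ under forward pullback and of $V^{u*}$ under backward pullback, verifying $\nu=f^*\nu_1+\om$ by an index shift and $\cP^s+\cP^u=\mathrm{Id}$, getting uniqueness of bounded solutions from the hyperbolic dichotomy, and recovering the duality with $S$ by telescoping $\nu_n(v_n)$ against the invariant measure --- is correct and is essentially the standard argument of the cited source, with the Hölder-regularity of the series correctly flagged as the only genuinely technical step.
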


Characterization (c) can be efficiently and conveniently recovered by the nonintrusive adjoint shadowing algorithm \cite{Ni_nilsas}.
This is an `adjoint' algorithm, which means that its cost is independent of the number of parameters.
More specifically, we solve the nonintrusive adjoint shadowing problem,
\[ \begin{split} 
  \nu =  \nu ' + \ueps a \,,
  \quad  \mbox{s.t. }
  \ip{\nu_0, \ueps_0}=0.
\end{split}\]
where $\nu'$ is the inhomogeneous adjoint solution with $\nu'_{N}=0$, $\ueps$ is a matrix with $u$ columns of homogeneous adjoint solutions, and the orthogonality is at step 0.
Nonintrusive means to solve $O(u)$ many solutions of the most basic recursive relations.
The nonintrusive adjoint shadowing algorithm was used on fluid problems with $u=8$, and $M\approx3\times10^6$; the cost was on the same order of simulating the flow, and the approximate derivative was useful \cite{Ni_nilsas,Ni_NILSS_JCP,Ni_fdNILSS,Ni_CLV_cylinder}.

Intuitively, the boundedness property is achieved by removing $u$-many unstable modes, since unstable modes are the main cause of unboundedness.
We have some liberty in choosing how to remove the unstable modes.
Here we choose to orthogonally project out the unstable modes at the last step, where the unstable modes are the most significant: this one is the easiest to implement.
It is also possible to do the oblique unstable projection using both the tangent and adjoint unstable modes.
We may also use least-squares minimization over the entire orbit, but solving the minimization is more complicated, and appendix~\ref{a:schur} shows how to do this with cost below the other parts of the fast response algorithm. 
It is to be tested which mimic is the best, both when the nonintrusive shadowing is used stand-alone and as a part of the fast response algorithms.
For the examples in this paper we found no obvious differences in the results.

When $u\ll M$, if the system has fast decay of correlation, and $X$ and $d\Phi$ are not particularly aligned with unstable directions, the shadowing contribution can be a good approximation of the entire linear response \cite{Ruesha}.
However, when the unstable contribution is large, or when better accuracy is desired, for example near design optimal, we need to further compute the unstable contribution \cite{RepolhoCagliari2021,Lasagna2019,BloniganPhdThesis}.
Since shadowing is also an important part for the unstable contribution, no work will be wasted.

\section{Unstable contribution}
\label{s:UC}

\subsection{Notations}
\hfill\vspace{0.1in}

The unstable contribution is defined as
\[ \begin{split}
  UC := \lim_{W\rightarrow\infty} UC^W, \quad
  UC^W := - \rho \left(\sum_{n=-W}^W  X^u (\Phi _n) \right). 
\end{split} \]
The integrand in the unstable contribution grows exponentially fast.
To treat this, integrate-by-parts on the unstable manifold, we get
\begin{equation} \begin{split} \label{e:uc}
  UC^W = \rho \left(\psi  \diverg_\sigma^u  X^u\right),
  \quad \textnormal{where} \quad 
  \psi := \sum_{m=-W}^W (\Phi \circ f^m-\rho(\Phi).
  \end{split} \end{equation}
Here $\diverg_\sigma^u$ is the submanifold divergence on the unstable manifold under the conditional SRB measure.
The unstable divergence turns out to be the derivative of the transfer operator on unstable manifolds \cite{TrsfOprt}; the norm of the integrand now is $O(\sqrt W)$, much smaller than the ensemble formula.
However, the directional derivatives of $X^u$ are distributions, so we need better formulas for computations.

Let $\nabla$ be the Riemannian derivative, $\cL$ be the Lie derivative.
Let $\tilde e:=e_1\wcw e_u$ be the unit $u$-dimensional cube spanned by unstable vectors, $\ip{\cdot, \cdot}$ is the inner product between $u$-vectors;
Let $\nabla f_*$ be the symmetric Hessian tensor defined by the Leibniz rule, 
\[ \begin{split}
  (\nabla_{\tv} f_*) \te 
  := \nabla_{f_* \tv } (f_*\te) - f_*\nabla_{ \tv} \te
  = \sum_{i=1}^u f_*e_1\wcw (\nabla_{\tv} f_*) e_i \wcw f_*e_u 
  \\
  = \sum_{i=1}^u f_*e_1\wcw (\nabla_{e_i} f_*) \tv \wcw f_* e_u
  =: (\nabla_{\te} f_*) \tv .
\end{split} \]
Let $\teps$ be the unit unstable $u$-covectors such that $\teps(\te)=1$, more specifically,
\[ \begin{split}
  \eps := \eps^1 \wcw \eps^u, \quad 
  \teps := \eps/\eps(\te), 
  \quad \textnormal{ where} \quad 
  \eps^i\in V^{u*}.
\end{split} \]

There are two different divergences on an unstable manifold, the first is taken within the unstable submanifold, or $u$-divergence, which applies to vector fields parallel to the submanifold,
\[ \begin{split}
  \div^u X^u:= \ip{\nabla_\te X, \te}.
\end{split} \]
The second kind is the equivariant unstable divergence, or $v$-divergence, as
\[ \begin{split}
  \div^v X:= \teps \nabla_\te X.
\end{split} \]
Note this divergence applies to any vector fields.
We define the $v$-divergence of the Jacobian matrix $f_*$, which is a Holder continuous covector field on the attractor.
\[ \begin{split}
  \div^v f_* :=\frac{\teps_1 \nabla_{\te} f_* } {|f_* \te|},
  \quad \textnormal{} \quad 
  (\div^v f_*) X :=\frac{\teps_1 (\nabla_{\te} f_*)X } {|f_* \te|},
  \quad \textnormal{where} \quad 
  \eps_1(x) := \eps(fx).
\end{split} \]

\subsection{A short formal proof of the equivariant divergence formula}
\hfill\vspace{0.1in}

This subsection gives a short and formal proof of the equivariant divergence formula for $UC$ via Lie derivatives.
Let $\sigma$ be the conditional SRB measure on unstable manifold $\cV^u$.
Let $\tT^u$ be the transfer operator on unstable manifolds caused by the flow of $X^u$, which depends on $\gamma$.
After the perturbation, the new measure on $\cV^u(x)$ has the following expressions.

\begin{theorem} [equivariant divergence formula \cite{TrsfOprt}] \label{t:fast adj formula}
  \[ \begin{split}
  \div ^u_\sigma X^u  
  = - \frac{\delta^u \tT^u\sigma} \sigma
  = \frac{\cL_{X^u} \sigma} \sigma
  = (\cS\omega) X + \div^v X,
  \quad \textnormal{where} \quad 
  \omega:=\div^v f_*.
  \end{split} \]
  Here $\cS$ is the adjoint shadowing operator.
\end{theorem}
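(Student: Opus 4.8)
My plan is to read the displayed chain as three separate identities and dispatch them in increasing order of difficulty. Throughout I write $\sigma = g\,\mathrm{vol}^u$, where $g$ is the Holder density of the conditional SRB measure against the Riemannian volume $\mathrm{vol}^u$ of the unstable leaf, and $X^u=P^uX$ is tangent to the leaf. The identity $\div^u_\sigma X^u = \cL_{X^u}\sigma/\sigma$ follows from the relation between Lie and Riemannian derivatives advertised before the statement: by Leibniz and $\cL_{X^u}\mathrm{vol}^u=(\div^u X^u)\,\mathrm{vol}^u$,
\[
  \frac{\cL_{X^u}\sigma}{\sigma}
  = X^u(\log g) + \div^u X^u
  =: \div^u_\sigma X^u.
\]
The identity $-\delta^u\tT^u\sigma/\sigma=\cL_{X^u}\sigma/\sigma$ is the generator-of-pushforward sign rule: at the reference parameter the $\gamma$-flow generating $\tT^u$ is the identity with infinitesimal generator $X^u$, and the derivative at $t=0$ of a pushforward is minus the Lie derivative, so $\delta^u\tT^u\sigma=-\cL_{X^u}\sigma$. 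Both steps need only the smoothness of $g$ along the leaf.

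All the work is in the last identity $\cL_{X^u}\sigma/\sigma=(\cS\omega)X+\div^v X$, for which I would exploit the equivariance of the conditional SRB density. Under $f$ the density transforms by the unstable Jacobian, so up to an additive constant $\log g$ is a backward-convergent sum of unstable log-Jacobians $\log|f_*\te|$ evaluated along the orbit, convergence being guaranteed by hyperbolicity. Differentiating turns $X^u(\log g)$ into a sum of $X^u$-derivatives of $\log|f_*\te|$ at every backward iterate, each transported to $x$ by the pullback $f^{*n}$. The crucial algebraic observation is that the differential of the unstable log-Jacobian is exactly the covector $\omega=\div^v f_*=\teps_1\nabla_\te f_*/|f_*\te|$ appearing in the statement; thus the transported sum is assembled from the very pieces $f^{*n}\omega_n$ that enter the split-propagate expansion $\cS(\omega)=\sum_{n\ge0}f^{*n}\cP^s\omega_n-\sum_{n\le-1}f^{*n}\cP^u\omega_n$ of characterization (b) in Theorem~\ref{t:AS}.

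It then remains to bookkeep the terms into the two target quantities. The local differential contributions of $X^u(\log g)$ combine with the orthogonal submanifold trace $\div^u X^u$ to produce the oblique divergence $\div^v X=\teps\nabla_\te X$, converting the metric trace into the one dual to $\teps$ and thereby supplying the transverse information that distinguishes $X$ from $X^u$; the remaining transported series, split into its stable-forward and unstable-backward parts, is recognized as the bounded covector $\cS\omega$ contracted against $X$, consistently under $\rho$ with characterization (a), $\rho((\cS\omega)X)=\rho(\omega\,S(X))$, where $S(X)$ is the shadowing shift of the orbit that resums the density's dependence on orbit position. The hard part, and the reason the proof is only formal, is the transverse regularity of $g$: it is merely Holder across leaves, so differentiating $\log g$ and manipulating the conditional measure are legitimate only along the unstable direction where $g$ is smooth; making the oblique-versus-orthogonal bookkeeping land exactly on $\div^v$, and confirming the convergence and resummation of the infinite sum into $\cS\omega$, are the delicate points that the careful proofs in \cite{TrsfOprt} settle.
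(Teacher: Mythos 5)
Your handling of the first two equalities is fine and matches the paper's remark that they are merely reinterpretations of $UC$; the real content is the last identity, and there your decomposition $\cL_{X^u}\sigma/\sigma = X^u(\log g)+\div^u X^u$ leaves two genuine gaps. The first is the pointwise claim that the differential of the unstable log-Jacobian ``is exactly'' $\omega=\teps_1\nabla_\te f_*/|f_*\te|$. It is not: differentiating $\log|f_*\te|$ in an unstable direction also hits $\te$ and $\teps_1$, not only $f_*$, and those extra terms cancel only after they telescope across the infinite backward product defining the density $g$. The paper performs exactly this telescoping by writing $e=\lim_T f_*^{T}\te_{-T}$ and applying the Leibniz rule to $\eps\nabla_{X^u}e$, so that each factor of $f_*$ is differentiated once and the boundary term $\eps_{-T}\nabla_{f_*^{-T}X^u}e_{-T}$ vanishes because the fields are unit; your version needs the same resummation, which you assert rather than carry out.

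The second gap is more serious. The series you obtain from $X^u(\log g)$ consists only of unstable-projected, backward-transported terms, so it can at best produce the $-\sum_{n\le-1}f^{*n}\cP^u\omega_n$ half of $\cS\omega$. The forward stable half $\sum_{n\ge0}f^{*n}\cP^s\omega_n$, and the conversion of the orthogonal leaf trace $\div^uX^u=\ip{\nabla_\te X^u,\te}$ into the oblique $\div^vX=\teps\nabla_\te X$, must come from somewhere, and you never identify the source. They come from differentiating the projection: $\nabla_\te(P^uX)=(\nabla_\te P^u)X+P^u\nabla_\te X$, and the derivative of the (only Holder) splitting along the leaf is itself a forward-convergent series. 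In the paper's route this is the term $\eps\nabla_e X^s$ arising from $X^u=X-X^s$, and it is the delicate step of the whole proof: one needs the flow $\xi$ of $X^s$, the identity $\cL_{X^s}\xi_*e=0$, and an absolute-continuity-type limit $\eps_T\nabla_{f_*^TX^s}f_*^T\xi_*e\to0$ to turn it into the forward stable sum. Saying that the ``remaining transported series splits into stable-forward and unstable-backward parts'' cannot work, because the series you actually constructed has no forward part to split off. Until you supply an argument for $\eps\nabla_eX^s$ (equivalently, for $(\nabla_\te P^u)X$), the last identity is not established even at the formal level.
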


\begin{remark*}
(1)
The equivalence between the first three expressions are just due to different interpretations of $UC$.
(2)
The last expression is `adjoint' in the utility sense, that is, to compute $\div ^u_\sigma X^u$ for a new $X$, we only need to apply $\cS \omega$ on $X$ and contract the 2-tensor $\nabla X$, whose cost are marginal compared to computing quantities not affected by $X$, such as $\te, \teps, \cS \omega$.
(3)
The derivatives in the equivariant formula hits only $f_*$ and $X$, which are smooth. Hence, all quantities are well-defined pointwise.
(4)
Compared to previous linear response formulas, in particular those well-defined pointwise \cite{Ruelle_diff_maps_erratum,Gouezel2008}, this formula is given by recursive relations on an orbit, and we only require $2u+2$ such recursions, which should be optimal.
It also has an explicit physical interpretation of unstable transfer operators, which is convenient for deriving explicit formulas for transient perturbations or continuous time cases.
\end{remark*}

\begin{proof}
This formal proof omits a lot of detailed estimations which coincide with \cite{TrsfOprt}, and mainly presents the different part, that is, here we use the relation between Lie and Riemannian derivative to decompose the unstable perturbation.

Denote the conditional measure by the $u$-covector $\eps$, 
\[ \begin{split}
  \eps = \sigma \teps = \lim_{T\rightarrow \infty} f^{*-T} \teps_{-T},
  \quad \textnormal{} \quad 
  e = \lim_{T\rightarrow \infty} f_*^{T} \te_{-T}.
\end{split} \]
So $\eps e=1$.
The re-distribution of the conditional measure caused by $X^{u}$ is
\begin{equation} \begin{split} \label{e:xiaoyue}
  \frac{\delta \cT\sigma} \sigma
  = \frac{(\cL_{-X^{u}} \eps) e}{\eps e}
  = \frac{\eps (\cL_{X^{u}} e) }{\eps e}
  = \frac 1{\eps e} \left( \eps \nabla_{X^{u}}e - \eps \nabla_e X^u   \right)
  = \eps  \left(\nabla_{X^{u}}e - \nabla_e X +\nabla_e X^s  \right).
\end{split} \end{equation}
For the second term, by definition of $\div^v$,
\[ \begin{split}
   \eps  \nabla_e X =: \div^v X.
\end{split} \]

For the first term in \eqref{e:xiaoyue}, use the Leibniz rule,
\[ \begin{split}
  \eps \nabla_{X^{u}} e
  = \eps \nabla_{f_*^T f_*^{-T}X^{u}} f_*^T e_{-T}
  = \eps f_*^T \nabla_{f_*^{-T}X^{u}} e_{-T}
  + \eps \sum_{n=0}^{T-1} f_*^{T-n-1}(\nabla_{f_*^{n-T} X^u} f_*) f_*^{n} e_{-T}
  \\
  = \eps_{-T} \nabla_{f_*^{-T} X^u } e_{-T}
  + \sum_{n=0}^{T-1} \eps_{n+1-T} (\nabla_{f_*^{n-T} P^u X} f_*) e_{n-T}
  = X \sum_{n=0}^{T-1} f^{*n-T} \cP^u (\div^v f_*)_{n-T}
\end{split} \]
Here $\eps_{-T} \nabla_{f_*^{-T}X^{u}} e_{-T}\rightarrow 0$ since $e_{-T}$ and $\eps_{-T}$ are unit fields.

\begin{figure}[ht] \centering
  \includegraphics[width=0.8\textwidth]{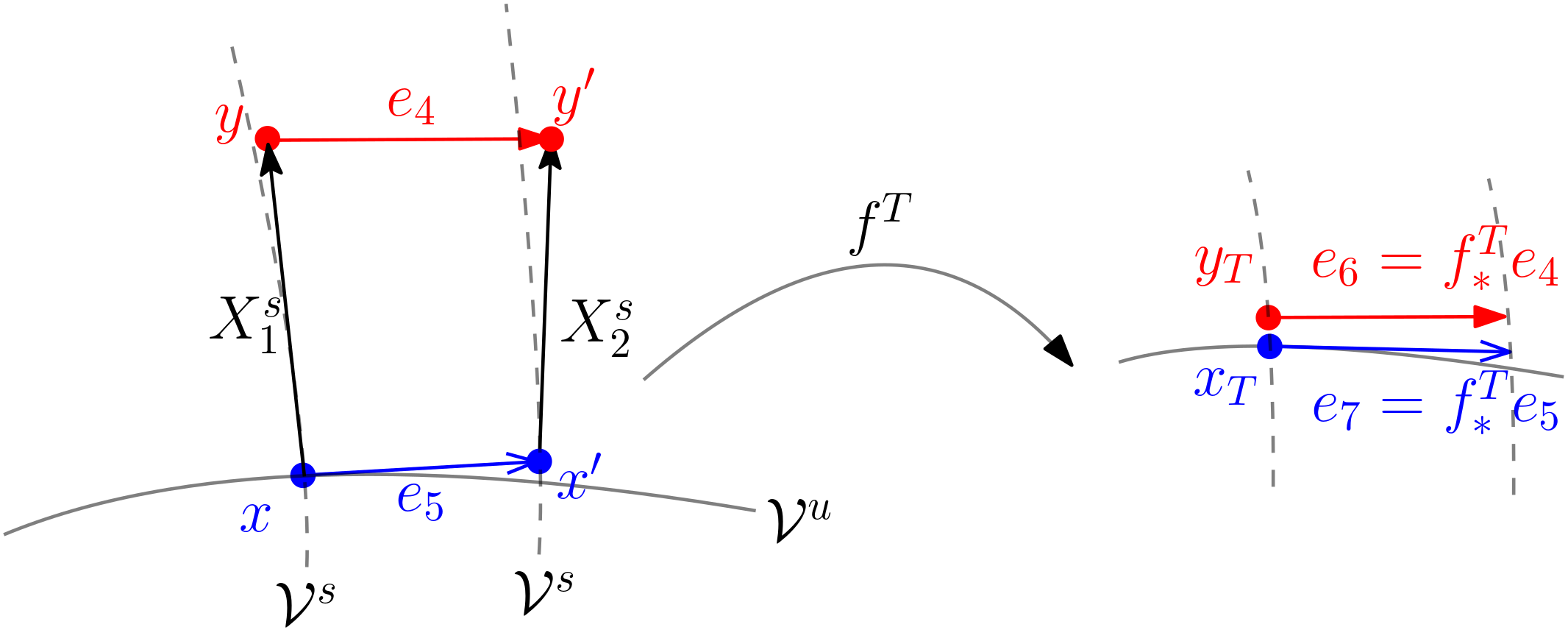}
  \caption{Intuitions for $\nabla_eX^s$. $x'-x=e_5=e(x)$, $y-x=X^s_1:=X^s(x)$,  $y'-x'=X^s_2:=X^s(x')$, and $e_4:=y'-y$. 
  Note that $e_4$ is pushforward of $e$ along $X^s$.}
  \label{f:bala}
\end{figure}

Figure~\ref{f:bala} gives an intuitive pictorial explanation for $\nabla_eX^s$ in the last term in~\eqref{e:xiaoyue}.
Intuitively, assuming $u=1$ and $\cM=\R^M$,
\[ \begin{split}
  \nabla_eX^s
  = X^s_2 - X^s_1 
  = (y'-x') - (y-x)
  = (y'-y) - (x'-x)
  = e_4-e_5
\end{split} \]
More specifically, we denote the flow of $X^s$ by $\xi$, and $\xi_*e$ be the pushfoward of $e$ by $\xi^\tau$ for a small interval of $\tau$.
Then $\cL_{X^s}\xi_*e = 0$, so
\[ \begin{split}
  \nabla_{e} X^s 
  = \nabla_{\xi_*e} X^s 
  = \nabla_{X^s}{\xi_*e}.
\end{split} \]
Apply the Leibniz rule, also note that $\xi_*$ is the identity when not differentiated, 
\[ \begin{split}
  \eps \nabla_{X^s} \xi_*e
  = \eps_T f_*^T \nabla_{X^s} \xi_*e
  = \eps_T \nabla_{f_*^T X^s} f_*^T \xi_*e
  - \eps_T \sum_{n=0}^{T-1} f_*^{T-n-1}(\nabla_{f_*^n X^s} f_*) f_*^{n} e
\end{split} \]
Intuitively, because $f_*^T \xi_*e \rightarrow f_*^Te $, in figure~\ref{f:bala}, $\eps_T(e_6-e_7)\rightarrow0$; hence, $ \eps_T \nabla_{f_*^T X^s} f_*^T \xi_*e \rightarrow 0$. 
We do not prove this intuition, which is closely related to a lemma of absolute continuity.
Acknowledging this, we have
\[ \begin{split}
  \eps \nabla_{e} X^s
  = \eps \nabla_{X^s} \xi^*e
  = - \sum_{n=0}^{T-1} \eps^{n+1}(\nabla_{f_*^n P^s X} f_*) e_n
  = - X \sum_{n=0}^{T-1} f^{*n} \cP^s (\div^vf_*)_n.
\end{split} \]

Summarizing,
\[ \begin{split}
  \frac{\delta \cT\sigma} \sigma
  = -\div^v X
  - X \sum_{n\ge0} f^{*n} \cP^s (\div^vf_*)_n
  + X \sum_{n\le-1} f^{*n} \cP^u (\div^vf_*)_n.
\end{split} \]
By (b) of the adjoint shadowing lemma,
\[ \begin{split}
  \frac{\delta \cT\sigma} \sigma
  = -\div^v X - \cS(\div^v f_*).
\end{split} \]
\end{proof}

We previously developed a tangent formula and algorithm which computes the unstable contribution via only forward iterations \cite{fr}; the equivalence between the tangent and our current adjoint formula is given in \cite{TrsfOprt}.
Comparing to the adjoint algorithm to be given in this paper, the (tangent) fast response algorithm runs only forwardly, so it is faster than the adjoint algorithm, though the number of flops (float point operations) are the same for one parameter.
But the tangent algorithm's cost is linear to the number of parameters.
There are other similar tangent algorithms later on, but are less efficient \cite{Chandramoorthy2021a}.
Another issue of the tangent formulas is that they seem mysterious due to lack of intermediate quantities with physical meanings.

\section{Fast adjoint response algorithm}
\label{s:algorithm}

This section develops the algorithm.
We show how to compute the equivariant divergence formula on a long orbit divided into small segments: this treatment finally realizes our claim that we only need to track $2u$ recursive relations.
Moreover, this multi-segment treatment will help reduce numerical error and the frequency of renormalizations, and get rid of computing determinants.
We shall also write everything in matrix notations.
Then we give a detailed procedure list of the algorithm.

\subsection{Notations}
\hfill\vspace{0.1in}

Our convention for subscripts on multi-segments is shown in figure~\ref{f:subscript}.
We divide an orbit into small segments of $N$ steps.
The $\alpha$-th segment consists of step $\alpha N$ to $\alpha N+N$,
where $\alpha$ runs from $0$ to $A-1$;
notice that the last step of segment $\alpha$ is also the first step of segment $\alpha+1$.
We use double subscripts, such as $x_{\alpha, n}$,
to indicate the $n$-th step in the $\alpha$-th segment,
which is the $(\alpha N+n)$-th step in total.
Note that for some quantities defined on each step, for example,
$e_{\alpha, N}\neq e_{\alpha+1, 0}$,
since renormalization is performed at the interface across segments.
Continuity across interfaces is true only for some quantities, 
such as shadowing covectors $\nu$, $\tnu$,
and unit unstable cubes $\te$, $\teps$.
Later, we will define some quantities on the $\alpha$-th segment,
such as $C_\alpha, d_\alpha$: their subscripts are the same as the segment they are defined on.
For quantities to be defined at interfaces, 
such as $Q_\alpha, R_\alpha, b_\alpha$,
their subscripts are the same as the total step number of the interface divided by $N$.

\begin{figure}[ht] \centering
  \includegraphics[width=0.7\textwidth]{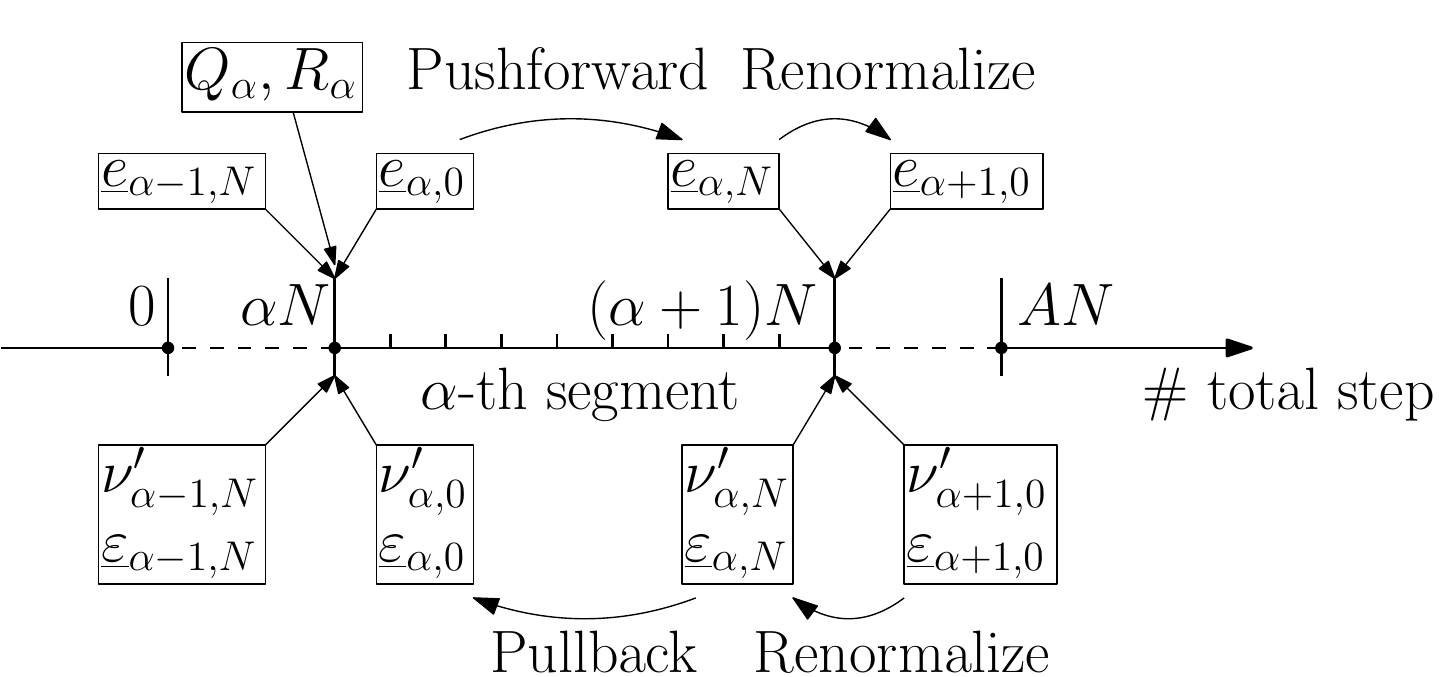}
  \caption{Subscript convention on multiple segments.}
  \label{f:subscript}
\end{figure}

We introduce some matrix notations. Let $I$ be the $u\times u$ identity matrix.
Moreover, denote $\underline e:=[e_1,\cdots,e_u]$, $\underline \eps:=[\eps^1,\cdots,\eps^u]$,
\[ \begin{split}
  \underline \eps^T \underline e := [\eps^i e_j]_{ij},
  \quad \textnormal{so} \quad 
  \eps e = \det(\underline \eps^T \underline e).
\end{split} \]
Here $[\cdot]_{ij}$ is the matrix with $(i,j)$-th entry given inside the bracket.
Similarly, 
\[ \begin{split}
  \ueps^T \ueps := \left[\ip{\eps^i, \eps^j}\right]_{ij}, \quad
  \ue^T \ue := \left[\ip{e_i,e_j}\right]_{ij}.
\end{split} \]
where $\ip{,}$ is the inner product.
For any vector $w$,
\[ \begin{split}
  \eps_1 (\nabla_e f_*) w = \sum_{i=1}^u \det \mat{
  \eps_1^1 f_* e_1 &\cdots &  \eps_1^1 (\nabla_{e_i}f_*)w &\cdots &\eps_1^1 f_* e_u &\\
  \vdots && \vdots && \vdots \\ 
  \eps_1^u f_* e_1 &\cdots &  \eps_1^u (\nabla_{e_i}f_*)w &\cdots &\eps_1^u f_* e_u &\\
  }.
\end{split} \]
The expression for $\nabla_e X$ is similar.

\subsection{Multi-segment treatment}
\hfill\vspace{0.1in}

When $\eps$ and $e$ have duality, the expressions in the last subsection admits significant reductions, relieving us from computing the determinant.
This section shows how to achieve such duality, while keeping tangent and adjoint solutions bounded via renormalizations.
Also for cost reasons, we want to perform renormalizations only occasionally.
The solution is to divide a long orbit into multiple segments, renormalize at interfaces, and use only non-normalized quantities within each segment.

\begin{lemma} [formulas using non-normalized $e$ and $\eps$] \label{l:violin}
For any unstable $e$ and $\eps$,
\[ \begin{split}
  \teps \nabla_{\te} X = \frac{1} {\eps e} \eps \nabla_{e} X ;
  \quad \quad 
  \omega = \frac{1} {\eps_1 f_* e} \eps_1 \nabla_e f_* .
\end{split} \]
\end{lemma}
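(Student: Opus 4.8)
The goal is to prove Lemma~\ref{l:violin}, which asserts two rescaling identities relating the $v$-divergence defined via the \emph{unit} unstable cube and co-cube to the same quantities computed with \emph{non-normalized} $e$ and $\eps$. The plan is to unwind the definitions of $\te$ and $\teps$ and exploit the multilinearity (degree-$1$ homogeneity in each slot) of the wedge/contraction operations. Concretely, recall that $\te = e/|e|$ in the $u$-vector sense and $\teps = \eps/\eps(\te)$, so the unit co-cube is already built to satisfy $\teps(\te)=1$. The first identity should follow by substituting these normalizations into $\teps\nabla_{\te}X$ and tracking how the scalar normalization factors pass through the derivative.

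First I would treat the first identity, $\teps\nabla_{\te}X = (\eps e)^{-1}\,\eps\nabla_e X$. The key observation is that the contraction $\eps\nabla_e X$ (as written in the matrix display of the previous subsection) is separately $1$-homogeneous in the $u$-vector $e$ and in the $u$-covector $\eps$: replacing $e$ by $e/|e|$ scales the determinant expression by $1/|e|$, and replacing $\eps$ by $\eps/(\text{its norm})$ scales by the reciprocal of that norm. Since $\te = e/|e|$ and $\teps = \eps/\eps(\te)$, the combined normalization factor on $\teps\nabla_{\te}X$ relative to $\eps\nabla_e X$ is exactly $1/(|e|\cdot\eps(\te))$. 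Finally, using $\eps(\te)=\eps(e)/|e|$, the denominator collapses to $\eps(e)=\eps e$, which by the earlier determinant identity equals $\det(\underline\eps^T\underline e)$. This yields the claimed factor $1/(\eps e)$.

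Next I would handle the second identity, $\omega = (\eps_1 f_* e)^{-1}\eps_1\nabla_e f_*$. Here I appeal directly to the definition $\omega = \div^v f_* = \teps_1(\nabla_{\te}f_*)/|f_*\te|$ from the Notations subsection. The same homogeneity argument applies: $\teps_1(\nabla_{\te}f_*)$ is $1$-homogeneous in $\te$ and in $\teps_1$, so rewriting in terms of the non-normalized $e$ and $\eps_1$ introduces the scalar factor $1/(\eps_1(f_*e))$ once I combine the $|f_*\te|$ denominator with the normalization of $\teps_1$ and note that $\eps_1$ is defined so that its duality pairing with the pushed-forward cube $f_*\te$ matches the unit covector on the \emph{next} step. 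The identity $|f_*\te|$ and the normalization of $\teps_1 = \eps_1/\eps_1(f_*\te)$ conspire so that the net denominator is precisely $\eps_1 f_* e$.

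The main obstacle, and the step that requires care rather than mere bookkeeping, is the second identity: one must be precise about the fact that $\teps_1$ is normalized against the \emph{pushed-forward} cube $f_*\te$ (equivalently, $\eps_1(x)=\eps(fx)$ lives on the image step), not against $\te$ itself, so the normalization factor is $\eps_1(f_*\te)$ rather than $\eps_1(\te)$. Getting the correct denominator $\eps_1 f_* e$ hinges on correctly identifying which cube the co-cube is dual to. The first identity is comparatively routine once the homogeneity degrees are established. I do not expect to need any new analytic estimates here: the entire lemma is an algebraic consequence of multilinearity and the explicit normalizations $\te = e/|e|$, $\teps=\eps/\eps(\te)$, and $\teps_1 = \eps_1/\eps_1(f_*\te)$, so the proof should be short.
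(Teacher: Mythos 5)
Your overall strategy --- unwind the normalizations $\te = e/|e|$ and $\teps = \eps/\eps(\te)$ and use the fact that the contractions are $1$-homogeneous in the $u$-vector and in the $u$-covector separately --- is exactly the paper's proof, and your treatment of the first identity is correct and complete.

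For the second identity, however, the normalization you assign to $\teps_1$ is off by a scalar factor, and it sits precisely at the step you yourself single out as the one requiring care. Since $\teps_1(x)=\teps(fx)$ and $\teps=\eps/\eps(\te)$ pointwise, the correct relation is $\teps_1 = \eps_1/\eps_1(\te_1)$ where $\te_1 = f_*\te/|f_*\te|$ is the unit unstable cube at the next step; hence $\teps_1 = \eps_1\,|f_*\te|/\eps_1(f_*\te)$, not $\eps_1/\eps_1(f_*\te)$ as you wrote (your covector pairs to $1/|f_*\te|$ against $\te_1$ rather than to $1$). Substituting your literal formula into $\omega = \teps_1(\nabla_{\te}f_*)/|f_*\te|$ yields $\eps_1\nabla_e f_*/\bigl(\eps_1(f_*e)\,|f_*\te|\bigr)$, i.e.\ an uncancelled $1/|f_*\te|$, so the factors do not ``conspire'' as claimed. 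With the correct normalization, the $|f_*\te|$ in the definition of $\div^v f_*$ cancels against the $|f_*\te|$ hidden in $\teps_1$, which is exactly the paper's one-line bookkeeping $|e|\,|f_*\te|\,\te_1 = |f_*e|\,\te_1 = f_*e$, and the denominator $\eps_1 f_*e$ comes out. So the route is the right one and the stated conclusion is correct; you only need to fix the definition of $\teps_1$ and redo the cancellation.
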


\begin{proof}
Recall that $\teps = \eps/\eps(\te)$, $\te = e/|e|$, so
\[ \begin{split}
  \teps \nabla_{\te} X
  = \frac{\eps} {\eps \te} \nabla_{ \frac e {|e|} } X
  = \frac{\eps} {\eps (|e| \te)} \nabla_{ e } X
  = \frac{1} {\eps e} \eps \nabla_{e} X.
\end{split} \]
\[ \begin{split}
  \omega
  := \teps_{1} \frac{\nabla_{\te} f_* } {|f_*\te|} 
  = \frac{\eps_1}{\eps_1 \te_1} \frac{\nabla_{e/|e|} f_* } {|f_*\te|} 
  = \frac{\eps_1}{\eps_1 \te_1} \frac{\nabla_e f_* } {|e| |f_*\te|} 
  = \frac{1}{\eps_1 f_*e} \eps_{1} \nabla_e f_* ,
\end{split} \]
since $|e| |f_*\te|\te_1 = |f_* e|\te_1 =f_* e $.
\end{proof}

\begin{lemma}[duality condition] \label{l:keep dual}
  Within any segment $\alpha$, let
  \[ \begin{split}
  e_{\alpha, n}:= f_* e_{\alpha, n-1}, \quad 
  \eps_{\alpha, n}:= f^* \eps_{\alpha, n+1},
  \end{split} \]
  For any invertible matrix $R_{\al}$, when moving across segments, if
  \[ \begin{split}
  (\ueps^T \ue)_{\al,0} = I  , \quad 
  \ue_{\al-1, N} = \ue_{\al,0} R_\al, \quad 
  \ueps_{\al-1, N}  = \ueps_{\al, 0} R^{-T}_\al,
  \end{split} \]
  Then for all suitable $n$ in segment $\al-1$, we have
  $(\underline\eps^T \underline e)_{ n} = (\underline\eps_1^T f_*\underline e)_{ n} = I $, and
  \[ \begin{split}
  (\teps \nabla_\te X)_n = \sum_{i=1}^u (\eps^i \nabla_{e_i} X)_n , \quad 
  \omega_{ n} = \sum_{i=1}^u \eps_{n+1}^i (\nabla_{e_{n,i}}f_*) .
  \end{split} \]
\end{lemma}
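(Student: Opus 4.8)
The plan is to reduce the whole lemma to a single invariance: that the pairing matrix $\ueps^T\ue$ does not change under one step \emph{within} a segment. First I would fix a segment, recall the recursions $e_{n+1}=f_*e_n$ and $\eps_n=f^*\eps_{n+1}$, and apply the defining adjoint pairing $(f^*\eta)(w)=\eta(f_*w)$ entrywise to get
\[
  (\ueps^T\ue)_n[i,j]=\eps^i_n(e_{j,n})=\eps^i_{n+1}(f_*e_{j,n})=\eps^i_{n+1}(e_{j,n+1})=(\ueps^T\ue)_{n+1}[i,j].
\]
Thus the pairing matrix is constant in $n$ along the segment. The same one-line computation, read one step later, also gives $(\ueps_1^T f_*\ue)_n[i,j]=\eps^i_{n+1}(f_*e_{j,n})=(\ueps^T\ue)_{n+1}[i,j]$, so the ``shifted'' pairing at step $n$ coincides with the plain pairing at step $n+1$.

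Next I would use the interface hypotheses to propagate the normalization $\ueps^T\ue=I$. Starting from $(\ueps^T\ue)_{\al-1,0}=I$ at the start of segment $\al-1$ and the invariance just proved, the pairing equals $I$ at every step of segment $\al-1$; the shifted identity then yields $(\ueps_1^T f_*\ue)_n=I$ for all suitable $n$. I would also record, as the consistency that makes the segmented scheme self-sustaining, that the renormalization relations $\ue_{\al-1,N}=\ue_{\al,0}R_\al$ and $\ueps_{\al-1,N}=\ueps_{\al,0}R_\al^{-T}$ carry the normalization across the interface, since $R_\al^{-1}(\ueps^T\ue)_{\al,0}R_\al=R_\al^{-1}IR_\al=I$. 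Combined with Step~1 this closes an induction: duality at a segment start propagates to the next segment start for any invertible $R_\al$, so the same argument runs on every segment.

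Finally I would substitute the duality into Lemma~\ref{l:violin} together with the cofactor expansions from the Notations. Because $\eps e=\det(\ueps^T\ue)=1$, Lemma~\ref{l:violin} reduces $\teps\nabla_\te X$ to $\eps\nabla_e X$, which the Notations write as $\sum_i\det M_i$ with $M_i$ equal to $\ueps^T\ue=I$ but with its $i$-th column replaced by $[\eps^j\nabla_{e_i}X]_j$. A matrix that is the identity except in one column has determinant equal to that column's diagonal entry, so $\det M_i=\eps^i\nabla_{e_i}X$ and the sum collapses to $\sum_i\eps^i\nabla_{e_i}X$. The identical argument, now with $\eps_1 f_*e=\det(\ueps_1^T f_*\ue)=1$, gives $\omega_n=\sum_i\eps^i_{n+1}(\nabla_{e_{n,i}}f_*)$.

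The argument is not analytically hard; the main obstacle is bookkeeping. The delicate point is reading the two expressions $\eps\nabla_e X$ and $\eps_1\nabla_e f_*$ correctly as sums of column-replacement determinants and then checking that each collapses to a single diagonal term precisely when the pairing matrix is the identity. This collapse---trading every $u\times u$ determinant for a diagonal sum---is the entire purpose of enforcing duality at segment starts, and is what removes the $O(Mu^2)$ determinant evaluations from the inner loop.
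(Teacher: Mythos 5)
Your proof is correct and follows essentially the same route as the paper's: show the pairing matrix $\ueps^T\ue=\ueps_1^Tf_*\ue$ is constant within a segment, use the interface relations to get $R_\al^{-1}IR_\al=I$ at one step of segment $\al-1$, and then invoke Lemma~\ref{l:violin}. You additionally spell out the column-replacement determinant collapse that the paper leaves implicit in ``then use lemma~\ref{l:violin}'', which is a useful elaboration but not a different argument.
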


\begin{remark*}
(1)
Typically we perform QR factorization to $\ue_{\al-1,N}$, 
so $\ue_{\al,0}$ is orthogonal, and $R_\al$ is upper triangular.
(2)
If not for round-off errors,
$\ueps_{\al,0}^T \ue_{\al-1, N}=\ueps_{\al,0}^T \ue_{\al,0} R_\al =R_\al$.
To guarantee the duality, we compute via the more expensive expression,
$\ueps_{\al-1, N} = \ueps_{\al, 0} (\ueps_{\al,0}^T \ue_{\al-1, N})^{-T}$.
\end{remark*}

\begin{proof}
  Since $\underline\eps_1^T f_*\underline e = \ueps_1^T \ue_1 = \ueps^T \ue$ is constant on segment $\al-1$,
  we only need to show $(\ueps^T\ue)_{\al-1, N}=I$, and this is because
  \[ \begin{split}
    (\ueps^T\ue)_{\al-1, N} 
    = (\ueps_{\al, 0} R_\al^{-T})^T \ue_{\al,0} R_\al
    = R_\al^{-1} \ueps_{\al, 0}^T \ue_{\al,0} R_\al
    = R_\al^{-1} R_\al = I.
  \end{split} \]
  Then use lemma~\ref{l:violin}.
\end{proof}

In the nonintrusive adjoint shadowing algorithm, 
we need to compute a particular inhomogeneous adjoint solution $\nu'$.
To avoid it from growing too large, we need to throw out its unstable part after every segment;
meanwhile, we should keep a continuous affine space, $\nu'+\spanof \ueps$, 
so that later we can recover a continuous shadowing covector $\nu$.
This was a standard part of nonintrusive shadowing algorithms,
but since we changed the renormalizing scheme on $\ueps$,
we shall re-derive the renormalizing scheme for $\nu'$ and continuity conditions.

\begin{lemma} [inhomogeneous continuity] \label{l:linus}
  Let 
  \[ \begin{split}
    \nu'_{\al-1, N} = \nu'_{\al, 0} - \ueps_{\al, 0} b_\al,
    \quad \textnormal{where} \quad 
    b_\al = \left( \ueps_{\al, 0}^T \ueps_{\al, 0} \right)^{-1}  \ueps_{\al, 0}^T \nu'_{\al, 0} ,
  \end{split} \]
  then $\nu'_{\al-1, N}\perp V^{u*}_{\al-1,N}$,
  and the affine space $\nu'_{\al-1, N}+ \spanof \ueps_{\al-1, N} = \nu'_{\al, 0}+ \spanof \ueps_{\al, 0} $.
  If the shadowing covector is represented nonintrusively as 
  \[ \begin{split}
  \nu_{\al,n} = \nu'_{\al,n} + \ueps_{\al, n} a_\al,    
  \end{split} \]
  then the continuity of $\nu$ across segments, $\nu_{\al,N} = \nu_{\al+1, 0}$, is equivalent to 
  \[ \begin{split}
    a_{\al-1} = R_\al^T (a_\al + b_\al).
  \end{split} \]
\end{lemma}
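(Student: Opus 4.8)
The plan is to verify each of the three assertions by direct substitution, relying on the renormalization relation $\ueps_{\al-1,N} = \ueps_{\al,0} R^{-T}_\al$ from lemma~\ref{l:keep dual} and on the fact that the columns of $\ueps$ are linearly independent and span $V^{u*}$. Throughout I would work with the single interface between segments $\al-1$ and $\al$, reindexing the stated continuity $\nu_{\al,N}=\nu_{\al+1,0}$ as $\nu_{\al-1,N}=\nu_{\al,0}$ so that all relations are expressed in the common basis $\ueps_{\al,0}$, where the cancellations become transparent.

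For the orthogonality claim, I would first note that $V^{u*}_{\al-1,N}$ is exactly $\spanof \ueps_{\al-1,N}$, which by the relation above equals $\spanof \ueps_{\al,0}$ since $R_\al$ is invertible. Hence it suffices to check $\ueps_{\al,0}^T \nu'_{\al-1,N}=0$. Substituting the definition $\nu'_{\al-1,N}=\nu'_{\al,0}-\ueps_{\al,0} b_\al$ and then the definition of $b_\al$, the two resulting terms cancel, exhibiting $b_\al$ as precisely the coefficient of the orthogonal projection of $\nu'_{\al,0}$ onto $\spanof \ueps_{\al,0}$; this is the standard normal-equation identity $\ueps^T(\nu'-\ueps(\ueps^T\ueps)^{-1}\ueps^T\nu')=0$.

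For the affine-space equality, I would observe that $\ueps_{\al,0} b_\al \in \spanof \ueps_{\al,0} = \spanof \ueps_{\al-1,N}$, so subtracting it from $\nu'_{\al,0}$ leaves the coset modulo that span unchanged; combined once more with $\spanof \ueps_{\al-1,N}=\spanof \ueps_{\al,0}$, the two affine spaces coincide. For the continuity condition, I would write $\nu_{\al-1,N}$ and $\nu_{\al,0}$ from the nonintrusive representation $\nu_{\al,n}=\nu'_{\al,n}+\ueps_{\al,n}a_\al$, set them equal, and substitute both $\nu'_{\al-1,N}=\nu'_{\al,0}-\ueps_{\al,0} b_\al$ and $\ueps_{\al-1,N}=\ueps_{\al,0} R^{-T}_\al$. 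After the $\nu'_{\al,0}$ terms cancel, every remaining term is $\ueps_{\al,0}$ times a $u$-vector; since the columns of $\ueps_{\al,0}$ are linearly independent, I can strip this common factor to obtain $R^{-T}_\al a_{\al-1}=a_\al+b_\al$, and rearranging gives $a_{\al-1}=R_\al^T(a_\al+b_\al)$.

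The computations are elementary linear algebra, so there is no genuine analytic obstacle here. The only point requiring care is bookkeeping: keeping track of which quantities live at the interface (subscript $\al$) versus within a segment, and correctly invoking lemma~\ref{l:keep dual} to replace $\ueps_{\al-1,N}$ by $\ueps_{\al,0} R^{-T}_\al$ so that all three relations reduce to identities in the basis $\ueps_{\al,0}$. Once that replacement is made, each claim collapses to a one-line cancellation.
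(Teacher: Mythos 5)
Your proposal is correct and follows essentially the same route as the paper's proof: orthogonality via the normal-equation cancellation after identifying $V^{u*}_{\al-1,N}=\spanof \ueps_{\al,0}$, the affine-space equality by noting $\ueps_{\al,0}b_\al$ lies in that span, and the continuity condition by substituting $\ueps_{\al-1,N}=\ueps_{\al,0}R_\al^{-T}$ and stripping the linearly independent columns of $\ueps_{\al,0}$. Your explicit reindexing of the continuity condition to the interface between segments $\al-1$ and $\al$ matches what the paper does implicitly.
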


\begin{proof}
  To see the orthogonality, notice that $ V^{u*}_{\al-1,N}= \spanof \ueps_{\al-1, N} = \spanof \ueps_{\al, 0}$, so
  \[ \begin{split}
    \ueps_{\al, 0}^T \nu'_{\al-1, N} 
    = \ueps_{\al, 0}^T \nu'_{\al, 0} - \ueps_{\al, 0}^T \ueps_{\al, 0} b_\al = 0.
  \end{split} \]
  The continuity of the affine space follows from definitions.
  For $\nu$,
  \[ \begin{split}
    \nu_{\al-1,N} = \nu_{\al, 0}
    \iff
    \nu'_{\al-1,N} + \ueps_{\al-1, N} a_{\al-1} = \nu'_{\al,0} + \ueps_{\al, 0} a_{\al}\\
    \iff
    - \ueps_{\al, 0} b_\al  + \ueps_{\al-1, N} a_{\al-1} = \ueps_{\al, 0} a_{\al}\\
    \iff
    \ueps_{\al, 0} \left(R_\al^{-T} a_{\al-1} - a_{\al} -  b_\al \right)=0
    \iff
    a_{\al-1} =R_\al^T (a_{\al} +  b_\al) .
  \end{split} \]
  Here the last equivalence is because $\ueps$ is a basis.
\end{proof} 

\subsection{Procedure list}
\hfill\vspace{0.1in}
\label{s:procedure}

% M=R^M then nabla f is easier, but others not easier
This subsection gives a detailed procedure list of the fast adjoint response algorithm.
When $\cM=\R^M$, corresponding simplifications are explained.
The subscript explanation is in figure~\ref{f:subscript}.

\begin{enumerate}[label={\roman*.}]

\item % push to attractor
Evolve the dynamical system for a sufficient number of steps before $n=0$,
so that $x_{0,0}$ is on the attractor at the beginning of our algorithm. 
Then, evolve the system from segment $\alpha=0$ to $\alpha=A-1$,
each containing $N$ steps, to obtain the orbit,
\[ \begin{split}
  x_{\alpha, n+1} = f(x_{\alpha, n}), \quad x_{\alpha+1,0} = x_{\alpha,N}.
\end{split} \]

\item 
Compute tangent solutions.
Set random initial conditions for each column in $\underline e:=[e_1,\cdots,e_u]$.
For $\alpha$ from $0$ to $A-1$ do the following:
\begin{enumerate}
  \item
  From initial conditions, solve tangent equations, $\alpha$ neglected,
  \[ \begin{split}
    \underline e_{n+1} = f_* \underline e_{ n}.
  \end{split} \]
  Here $f_*$ is the pushforward operator.
  In $\R^M$, the vectors are column vectors, and $f_*$ is the Jacobian matrix,
  \[ \begin{split}
    f_* = [\partial f^i/\partial z^j]_{ij},
  \end{split} \]
  multiplied on the left of column vectors.
  Here $[\cdot]_{ij}$ is the matrix with $(i,j)$-th entry given inside the bracket,
  $f^i$ is the $i$-th component of $f$, $z^j$ is the $j$-th coordinate of $\R^M$.

  \item 
  At step $N$ of segment $\alpha$, 
  orthonormalize $\underline e$ with a QR factorization
  \[ \begin{split}
    \underline e_{\alpha, N} = Q_{\alpha+1} R_{\alpha+1}.
  \end{split} \]
  
  \item
  Set initial conditions of the next segment, 
  \[
    \underline e_{\alpha+1, 0} = Q_{\alpha +1}.
  \]
\end{enumerate}

\item 
Compute adjoint solutions.
Set terminal condition $\nu'_{A-1,N}=\tnu'_{A-1,N}=0$, 
and $\ueps_{A-1,N}$ such that $(\ueps^T\ue) _{A-1,N} = I$;
here $\ueps= [\eps^1,\cdots,\eps^u]$.
In $\R^M$ we can set $\ueps_{A-1,N} = Q_A R_A^{-T}$.
For $\alpha$ from $A-1$ to $0$ do the following:
\begin{enumerate}
  \item
  From terminal conditions, solve homogeneous adjoint equations,
  \[ \begin{split}
    \underline \eps_{n-1} = f^* \underline \eps_{ n}.
  \end{split} \]
  In $\R^M$, covectors are represented by inner-product with column vectors,
  and the pullback operator $f^*$ at $x_n$ is the transposed Jacobian matrix evaluated at $x_{n-1}$,
  multiplied on the left of column vectors.

  \item  \label{step:omega}
  Compute the inhomogeneous term $\omega$ for the modified adjoint shadowing equation,
  note that we do not compute $\omega_n$ at $n=N$:
  \[ \begin{split}
    \omega_{n-1} = \sum_{ i=1 }^u \eps_{n}^i (\nabla_{e_{n-1,i}}f_*) .
  \end{split} \]
  Here $\nabla_{(\cdot)}f_*$ is the Riemannian derivative of $f_*$ \cite{fr}. 
  In $\R^M$, 
  \[ \begin{split}
    \omega_{n-1} = \sum_{i=1}^u \sum_{l=1}^M \sum_{k=1}^M
    \left[\eps_{n,l}^i \pp{^2 f_{n-1}^l}{x^k\partial x^j} e_{n-1,i}^k  \right]_j
    \in\R^M,
  \end{split} \]
  where $\eps_l, e^k$ are components in $\R^M$.

  \item
  Solve inhomogeneous adjoint equations,
  \[ \begin{split}
    \nu'_{n-1} = f^* \nu'_{n} + d\Phi_{n-1}, \quad
    \tnu'_{n-1} = f^* \tnu'_{n} + \omega_{n-1}.
  \end{split} \]
  Note that the terminal values come from terminal conditions.

  \item 
  At step $0$ of segment $\alpha$, compute and store
  \[ \begin{split}
    b_\al = \left( \ueps_{\al, 0}^T \ueps_{\al, 0} \right)^{-1}  \ueps_{\al, 0}^T \nu'_{\al, 0} ,\quad
    \tilde b_\al = \left( \ueps_{\al, 0}^T \ueps_{\al, 0} \right)^{-1}  \ueps_{\al, 0}^T \tnu'_{\al, 0} .
  \end{split} \]
  Here $\underline \eps^T \underline \eps := [\ip{\eps_i, \eps_j}]_{ij}$,
  and $\ip{\cdot,\cdot}$ in $\R^M$ is the inner product between column vectors.
  
  \item
  Set initial conditions of the next segment, 
  \[ \begin{split}
    \ueps_{\al-1, N} = \ueps_{\al, 0} (\ueps_{\al,0}^T e_{\al-1, N})^{-T}, \quad
    \nu'_{\al-1, N} = \nu'_{\al, 0} - \ueps_{\al, 0} b_\al,\quad
    \tnu'_{\al-1, N} = \tnu'_{\al, 0} - \ueps_{\al, 0} \tilde b_\al .
  \end{split} \]
\end{enumerate}

\item \label{step:ni} 
Solve the nonintrusive adjoint shadowing problem for $\{a_\alpha\}_{\al=0}^{A-1}$:
\[\begin{split}
  a_0 = - b_0;
  \quad \textnormal{} \quad 
  a_{\al} =  R^{-T}_\al a_{\al-1} - b_\al,
  \quad \alpha=1,\ldots,A-1.
\end{split}\]
Solve the same problem again, with $b$ replaced by $\tilde b$, for $\tilde a$.
We may also solve a least-squares version of the nonintrusive shadowing, as detailed in appendix~\ref{a:schur}.
Compute shadowing covectors
\[
 \nu_\alpha  := \cS(d\Phi) = \nu'_\alpha + \underline \eps_\alpha a_\alpha, \quad
 \tnu_\alpha := \cS(\omega) = \tnu'_\alpha + \underline \eps_\alpha \tilde a_\alpha.
\]

\item
Compute the shadowing contribution,
\[ 
  SC
  = \lim_{A\rightarrow\infty}
    \frac 1 {AN} \sum_{\alpha=0}^{A-1} \sum_{n=1}^N
     \nu_{\alpha,n} X_{\alpha,n}.
\]
Here $X_n:=(\partial f/\partial \gamma)_{n-1}$, 
where $\gamma$ is the parameter of the system.
Note that we do not compute $X_n$ at $n=0$.
Also note that $X$ is used only after major computations have been done;
no previous procedure depends on $X$.

\item 
\begin{enumerate}
  \item  Compute the unstable divergence.
  \[ \begin{split}
  \div ^u_\sigma X^u_n  
  = - \frac{\delta^u \tT^u\sigma_{-1}} \sigma (x_n)
  = (\tnu X)_n + \left( \eps \nabla_{e} X \right)_n.
  \end{split} \]
  In $\R^M$, 
  \[ \begin{split}
  \left( \eps \nabla_{e} X \right)_n
  = \sum_{i=1}^u \sum_{j=1}^M \sum_{l=1}^M
  \eps^i_{n,l} \ppp{f^l_{n-1}}{\gamma}{x^j} e^j_{n-1,i}.
  \end{split} \]

  \item Compute the unstable contribution
  \[ \begin{split}
  UC^W 
  = \lim_{A\rightarrow\infty} \frac 1 {AN} \sum_{\alpha=0}^{A-1} \sum_{n=1}^N
     \psi_{\alpha,n} \div ^u_\sigma X^u_{\alpha,n},
  \end{split} \]
  where $\psi := \sum_{m=-W}^W (\Phi \circ f^m-\rho(\Phi))$ for a large $W$.
\end{enumerate}

\item The linear response is 
\[ \begin{split}
  \delta \rho(\Phi) =\lim_{W\rightarrow \infty} SC - UC^W.
\end{split} \]

\end{enumerate}

\section{Numerical examples}
\label{s:examples}

\subsection{A 21 dimensional solenoid map}
\hfill\vspace{0.1in}

% governing equation
This subsection illustrates the fast adjoint response algorithm on 
a 21-dimensional solenoid map with 20 unstable dimensions.
Here $\cM = \R \times \T^{20}$, and the governing equation is
\[ \begin{split}
  x^1_{n+1} &= 0.05x^1_{n} + \gamma_1 + 0.1 \sum_{i = 2}^{21} \cos(5x^i_{n}) \\
  x^i_{n+1} &= 2x^i_{n} + \gamma_2 (1+x^1_{n}) \sin(2x^i_{n}) \mod 2\pi, \quad \textnormal{for} \quad 2\le i\le 21
\end{split} \]
where the superscript labels the coordinates,
and the instantaneous objective function is 
\[ \begin{split}
  \Phi(x) := (x^1)^3 + 0.005 \sum_{i = 2}^{21} (x^i - \pi)^2.
\end{split} \]
This is almost the same example as in the paper for fast response \cite{fr},
except for that now we have two parameters, $\gamma_1$ and $\gamma_2$.

The default setting, $N=20$ steps in each segment, 
$A=200$ segments, and $W=10$, is used unless otherwise noted.
The code is at \url{https://github.com/niangxiu/far}.

% obj~prm, convergence to N and W
We first let $\gamma = \gamma_1 = \gamma_2$ 
and consider derivatives computed with respect to one parameter.
We choose the default value $\gamma=0.1$.
Figure~\ref{f:change A} shows the convergence of the fast adjoint response algorithm as the orbit gets longer; in particular, the variance of the computed derivative is proportional to $A^{-0.5}$, like the sampling error of an orbit.
Figure~\ref{f:change W} shows that the bias in the average derivative decreases as $W$ increases,
but the variance increases roughly like $W^{0.5}$.
Figure~\ref{f:change parameter} shows that 
the derivative computed by the fast adjoint response correctly reflects 
the trend of the objective as $\gamma$ changes.
The plots we obtained via the fast adjoint response is almost identical 
to that of the fast response \cite{fr};
this is expected, since they are two algorithms for the same quantity.

\begin{figure}[ht] \centering
  \includegraphics[width=0.45\textwidth]{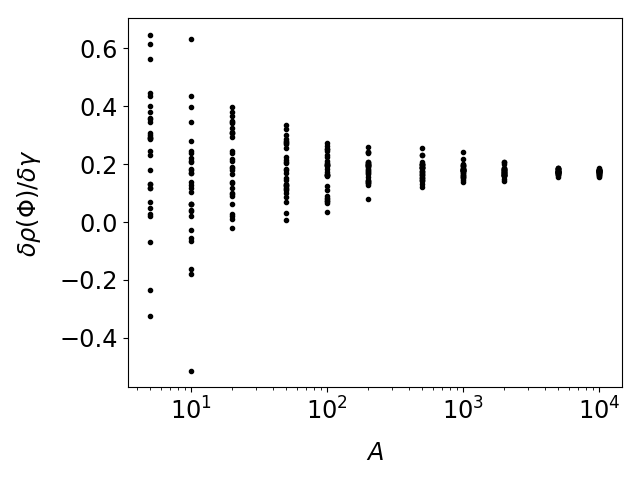}
  \includegraphics[width=0.45\textwidth]{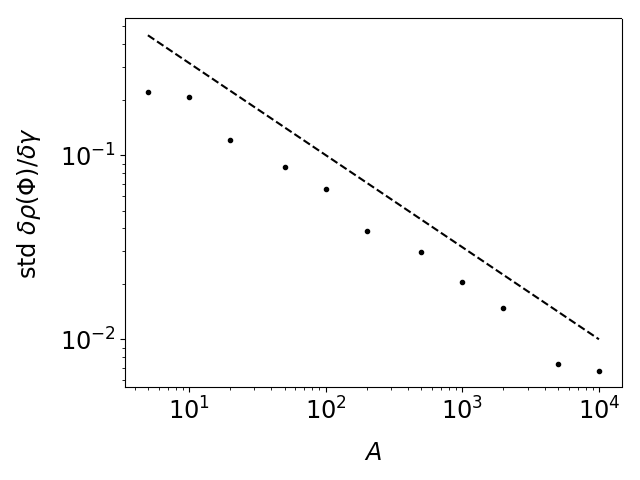}
  \caption{Effects of $A$. Left: derivatives from 30 independent computations for each $A$.
  Right: the sample standard deviation of the computed derivatives,
  where the dashed line is $A^{-0.5}$.}
  \label{f:change A}
\end{figure}

\begin{figure}[ht] \centering
  \includegraphics[width=0.45\textwidth]{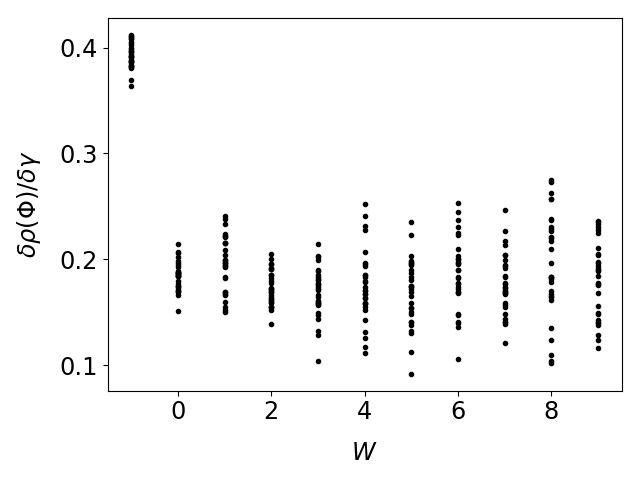}
  \includegraphics[width=0.45\textwidth]{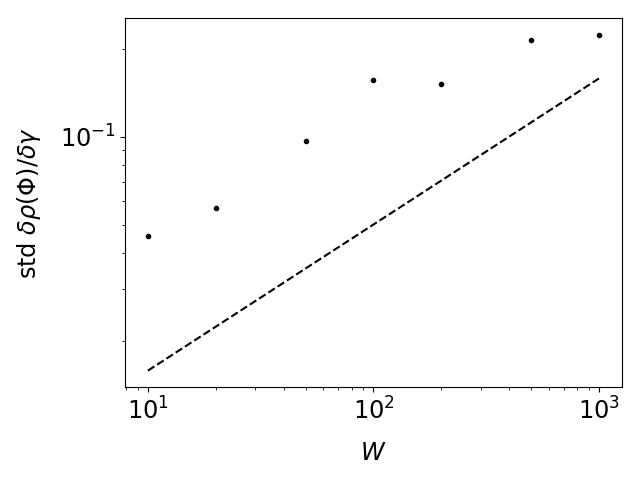}
  \caption{Effects of $W$. Left: derivatives computed by different $W$'s.
  Right: standard deviation of derivatives, where the dashed line is $0.005W^{0.5}$.}
  \label{f:change W}
\end{figure}

\begin{figure}[ht] \centering
  \includegraphics[width=0.5\textwidth]{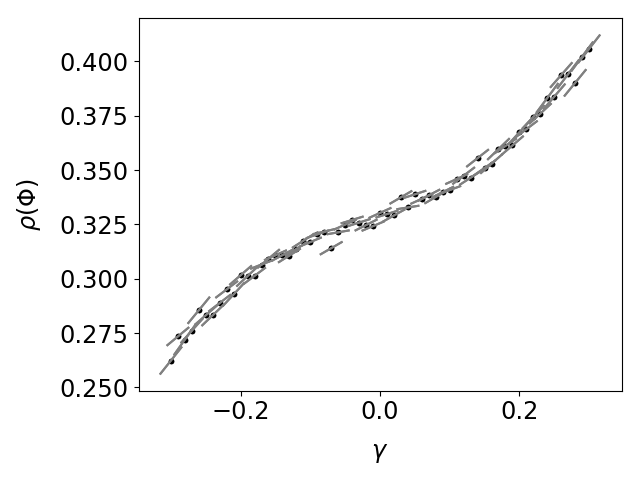}
  \caption{Average objectives for different parameter $\gamma$.
  The grey lines are the derivatives computed by fast adjoint response.
  Both the objective and derivative are computed from the same orbit of $200$ segments.}
  \label{f:change parameter}
\end{figure}

On a single-core 3.0GHz CPU, for $10^4$ segments, 
which is a total of $2\times10^5$ steps,
the time for computing the orbit is 5.6 seconds;
the fast adjoint response on the same orbit takes another 186 seconds.
So the time cost of fast adjoint response is about 34 times of simulating the orbit, or 6 times of the fast response.
Also, our algorithm can run even faster if we use the sparsity of the system, either via sparse matrices or graph tracing.

By the same analysis we have done in the tangent paper,
on our example, fast adjoint response is about $10^{6}$ times faster than ensemble or stochastic adjoint algorithms;
the isotropic finite-element operator method would take a tremendous amount of storage.
Other algorithms do not yet have adjoint versions.
When there is only one parameter and one objective, fast response has cost similar to finite differences,
which require data from several orbits to cancel the noise.
This cost comparison again recovers the simple case of computing derivatives for stable dynamical systems, where a conventional adjoint solver's cost is similar to finite differences.
Again, this hints that the efficiency of fast adjoint response is perhaps close to the best possible precise adjoint algorithms for chaos.

\begin{figure}[ht]
  \centering
  \includegraphics[width=0.6\textwidth] {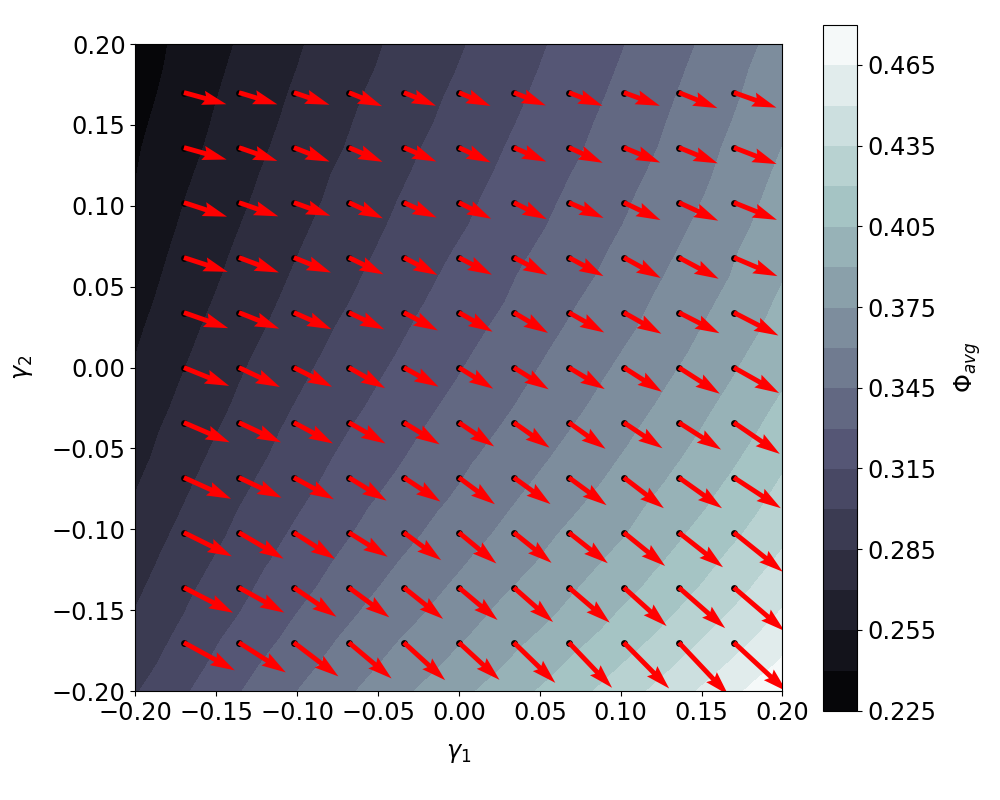}
  \caption{Gradients computed by fast adjoint response 
    and the contour of $\rho(\Phi)$.
    Here $\rho(\Phi)$ is averaged over 30 orbits with $1000$ segments,
    while the gradient is averaged over 30 orbits of $200$ segments.
    The arrow's length is $1/15$ of the gradient.}
  \label{f:contour}
\end{figure}

Finally, we demonstrate the fast adjoint response on two parameters $\gamma_1\neq \gamma_2$.
Fast adjoint response computes sensitivities with respect to multiple parameters with almost no additional cost; 
in fact, the cost of the unstable contribution is also independent of the number of objectives.
Figure~\ref{f:contour} illustrates the contour of $\rho(\Phi)$ and the gradients computed.
Since we use the same length unit for both parameters, gradients should be perpendicular to the level sets of the objective:
our algorithm indeed gives the correct gradient.

\subsection{Non-autonomous systems with stochastic noise}
\hfill\vspace{0.1in}

Our algorithm works on systems with a time-dependent outside force, or time-varying random noise.
In this case, we should modify our definition for stable/unstable subspaces and physical measures, which now varies with time.
Still, if we start a smooth density from the infinite past and evolve to the current step, we always get the physical measure at this step, regardless of the initial density.
We can rerun the derivations of the linear response and the fast response algorithm, an we still has the same expression.

For simplicity, we keep using the map in the previous subsection but reduce the space to $\cM = \R\times \T^{2}$.
The governing equation is 
\[ \begin{split}
  x^1_{n+1} &= 0.05x^1_{n} + \gamma_1 + 0.1 \sum_{i = 2}^{3} \cos(5x^i_{n}) + U \\
  x^i_{n+1} &= 2x^i_{n} + \gamma_2 (1+x^1_{n}) \sin(2x^i_{n}) + U \mod 2\pi, \quad \textnormal{for} \quad 2\le i\le 3,
\end{split} \]
where $U$ is a number generated by the uniform distribution on $[-5,5]$; each appearance of $U$ is generated independently.
We should control the noise level so that it does not ruin the uniform hyperbolicity; the range of $\gamma$ should also be accordingly reduced.
Change the objective function to 
\[ \begin{split}
  \Phi(x) := (x^1)^3 + \sum_{i = 2}^{3} (x^i - \pi)^2,
\end{split} \]
so that the shadowing contribution is on the same order as the unstable contribution, and both contributions can be tested.

\begin{figure}[ht] \centering
  \includegraphics[width=0.45\textwidth]{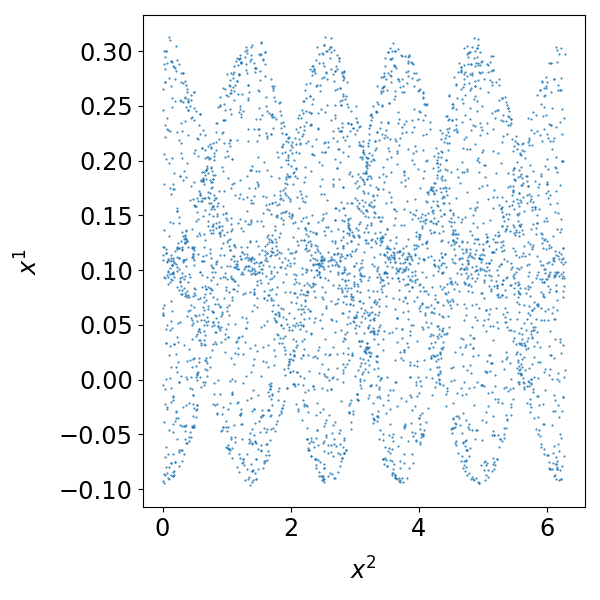}
  \includegraphics[width=0.45\textwidth]{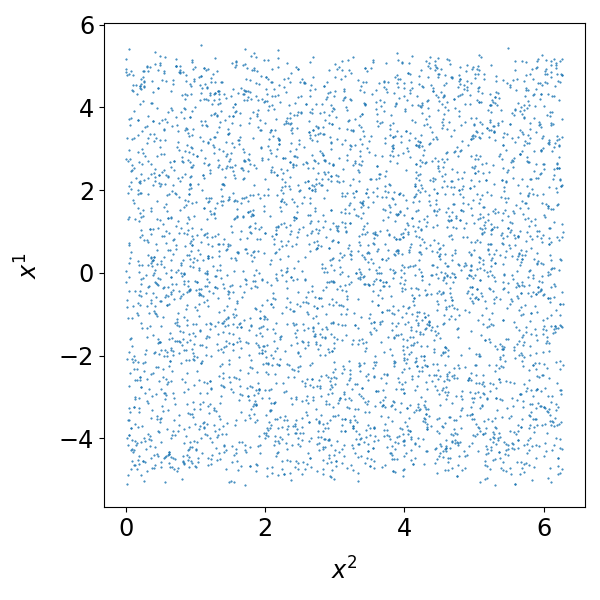}
  \caption{The physical measure approximated by an orbit of $200$ segments. Left: no noise. Right: with noise $U$.}
  \label{f:perk}
\end{figure}

\begin{figure}[ht] \centering
  \includegraphics[height=6cm]{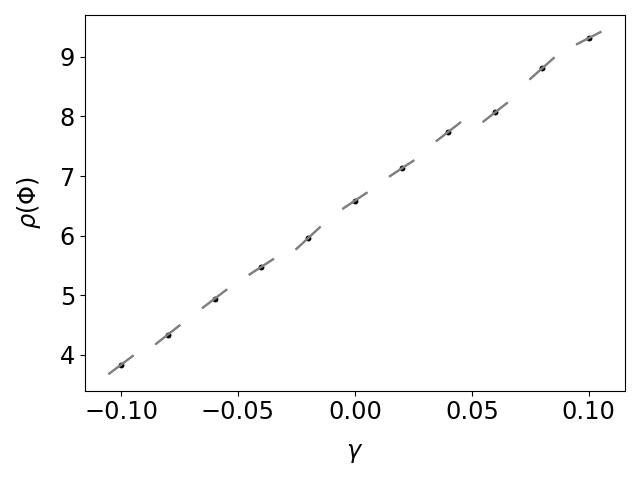}
  \caption{Derivatives for the system with random noise computed with $10^3$ segments. The objectives are averaged over $10^4$ segments.}
  \label{f:simons}
\end{figure}

The density of the physical measure approximated by an orbit is given in figure~\ref{f:perk}.
As we can see, the random forcing $U$ drastically changes the appearance of the physical measure.
Still, we can use the algorithm to compute a good derivative, as shown in figure~\ref{f:simons}.

There might be a caveat for random forcing, that is, we should assume the particular realization of the noise leads to the correct physical measure for a small range of $\gamma$.
We are not sure if this is true due to our limited knowledge, but we can alter our logic a little to get rid of randomness, that is, we fix the random noise once it is sampled, and then we compute the linear response for this determined forcing.
In the examples here we do \textit{not} fix the noise for different $\gamma$, and we still get good results.

\subsection{Lorenz maps with discontinuous conditional measure}
\hfill\vspace{0.1in}

We also apply our algorithm on the Lorenz map, a one-dimensional map on the interval $[0,1]$, which also represents the torus $\T$.
Its expression is
\[ \begin{split}
  f(x) = \begin{cases}
    Bx^2 + 2x + \gamma \sin(2\pi Tx)/2\pi T
    \quad \textnormal{mod 1,} \quad
    \quad\text{if}\quad 0\le x\le 0.5,
    \\
    B(x-1)^2 + 2-2x -\gamma \sin(2\pi Tx)/2\pi T
    \quad \textnormal{mod 1,} \quad 
    \quad\text{otherwise.}
  \end{cases}
\end{split} \]
Here the fixed objective function is $\Phi(x) = x$.

We first set $B=3$.
The graph of this map and its physical measures, approximated by a histogram of a typical orbit is shown in figure~\ref{f:lorenzsystem}.
Note that the density is not even continuous, even though we have only one unstable dimension.
We fix $N=20$ steps in each segment.
This example goes beyond our theoretical assumptions, so we should expect some systematic error; but our algorithm performs reasonably well, as shown in figure~\ref{f:B3}.

\begin{figure}[ht] \centering
  \includegraphics[height=6cm]{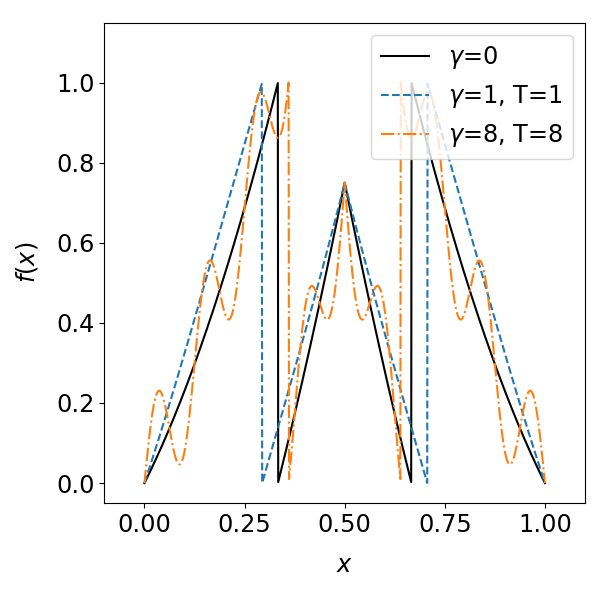}
  \includegraphics[height=6cm]{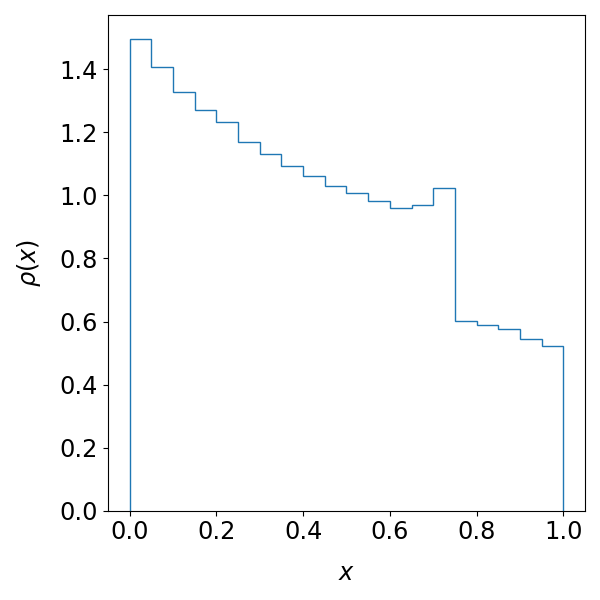}
  \caption{Left: the graph of the Lorenz map for $B=3$. Right: the physical measure approximated by a histogram.}
  \label{f:lorenzsystem}
\end{figure}

\begin{figure}[ht] \centering
  \includegraphics[height=6cm]{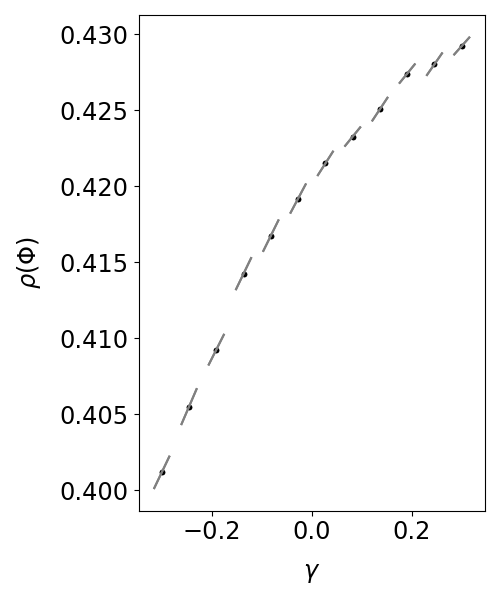}
  \includegraphics[height=6cm]{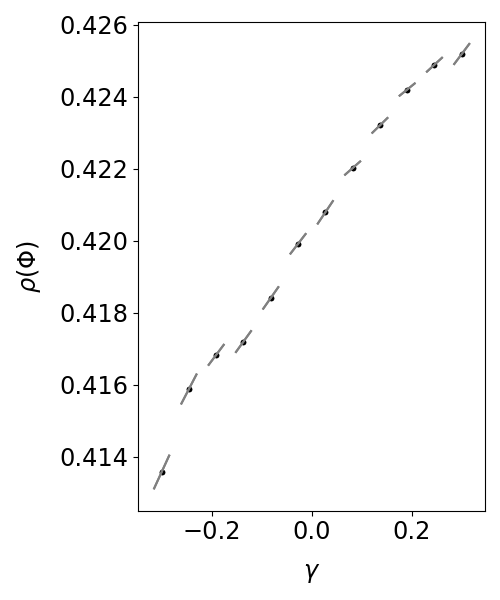}
  \caption{Derivatives for $B=3$ computed with $10^3$ segments. The objectives are averaged over $10^5$ segments. Left to right: $T=1, 8$.}
  \label{f:B3}
\end{figure}

We also use this example to explain a seemingly surprising phenomenon reported in \cite{shadowingNonPhysical}, where a small perturbation caused a large impact on the physical measure. 
Set $B=0$, so that our map is the normal tent map. 
Then we compute the derivatives for $T=1,7,8$, with results shown in figure~\ref{f:B0}; our algorithms works better than $B=3$ since now the conditional measure is smooth. 
As we can see, the derivative for $T=1$ and $8$ are roughly on the same order, but are much larger than $T=7$.
Since the sup norm of the perturbation $X$ decreases with $A$, it seems that we have a large response due to a small $X$.

\begin{figure}[ht] \centering
  \includegraphics[height=6cm]{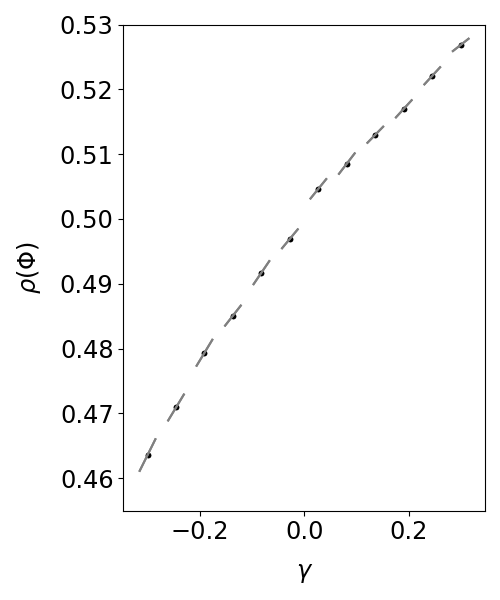}
  \includegraphics[height=6cm]{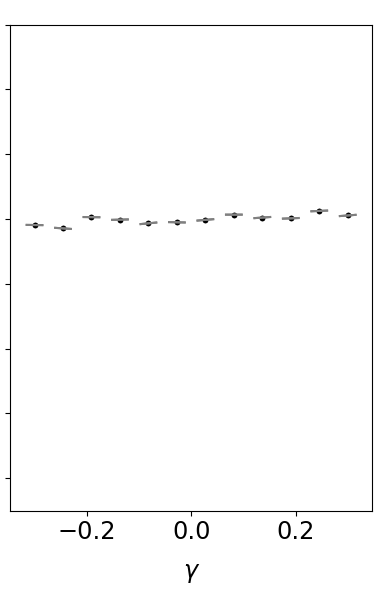}
  \includegraphics[height=6cm]{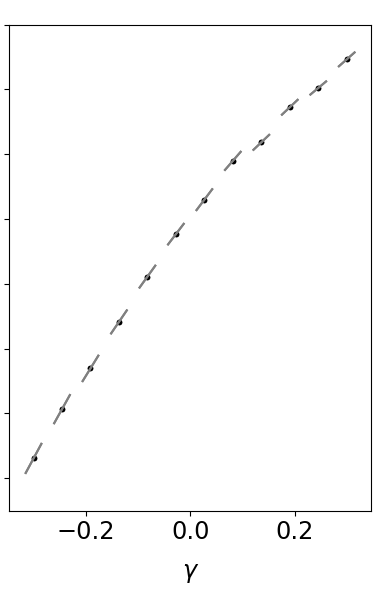}
  \caption{Derivatives for $B$=0 computed with $10^3$ segments. The objectives are averaged over $10^4$ segments. Left to right: $T=1, 7, 8$.}
  \label{f:B0}
\end{figure}

We may thus worry that numerical simulations of chaotic dynamics may fail to predict the true long-term behavior: it could happen, but not for this example.
First, our algorithm accurately computes the linear response.
Second, we know from ergodic theory that for uniform hyperbolic systems, the physical measure is the limit of small random perturbations \cite{young2002srb}.
Third, due to our $v$-divergence formula, we can say more than the previous theory.

The reason for an $X$ with small sup norm to cause a large response is a large $\div^v X$ which resonates with the map $f$.
For $X$ with a small sup norm, by our expressions, the shadowing contribution is small, so is the $\cS \omega X$ part in the unstable contribution.
To have a large response, we need $\div^v X \circ f^n$ to be large for some $n$.
Since $X$ is small, we must have high oscillations to have a large $\div^vX$; moreover, this oscillation needs to be picked up by the map.
For example, the perturbation with $T=7$ does not resonate with the standard tent map, but $T=8$ does.
For typical numerical simulations, the numerical error has no particular reason to resonate with the map, so we should not worry about the validity of simulating a hyperbolic system with enough uniformity, such as this given example.

\section{Discussions}
\hfill\vspace{0.1in}

We list some remarks on the algorithm.
First, the main cost of the algorithm is computing $\omega$ in step~\ref{step:omega} of the procedure.
For a dense 3-tensor $\nabla f_*$, this can be very expensive, since for each $j$, $\omega^j$ requires summing $uM^2$ terms: this is a total of $O(uM^3)$ float operations to get $\omega$.
However, for many engineering applications, such as fluid mechanics and convolutional neural networks, $\nabla f_*$ is sparse.
This means that for each $j$, $\partial^2 f^l/\partial x^k \partial x^j $ is nonzero only for a few $l$ and $k$.
Hence, for these situations, computing $\omega$ costs only $O(uM)$ per step, and the overall complexity is the same order as the nonintrusive shadowing and the fast response algorithm.
Another major cost is $O(u^2M)$ per segment for the occasional QR factorization, which is also needed for the nonintrusive shadowing. 
Hence, the extra cost of the fast response to the nonintrusive shadowing is $O(uM)$ per step.

The cost for a new $X$ is $O(uM)$ per step for a sparse system, since we need to compute $\nu X$, $\tnu X$, and $\div^v X$.
Although in the same order, the constant behind the big-$O$ is smaller than the main part of the algorithm, since we are contracting the 2-tensor $\nabla X$ rather than the 3-tensor $\nabla f_*$.
This is the same as conventional adjoint method, where computing inner-products for a new $X$ is $O(M)$, same order as the cost of solving the adjoint equation.
Moreover, we do not have the $O(u^2M)$ per segment cost for additional $X$.

Although the number of float number operation for computing adjoint solutions 
are similar to tangent ones, adjoint solvers can take longer time.
This is because adjoint solvers runs backward in time,
which requires storing and reading data of the orbit.
It seems that the most time-saving solution is to save checkpoints of the orbit
and then rerun a segment of the orbit when solving the adjoint equations.
This makes the adjoint solver a few times slower than tangent solvers.

Note that the cost for computing the unstable contribution is not only almost independent of the number of parameters; it is also almost independent of the number of objectives.
This is different from conventional adjoint methods, whose cost depends almost linearly on the number of objectives.
On the other hand, we require both tangent and adjoint solvers, whereas conventional adjoint and adjoint shadowing algorithms require only adjoint solvers.

% refer to fr
Many practical comments for the fast response algorithm still apply in the adjoint version.
For example, why our work may hold beyond the strong hyperbolicity assumptions we used in the proof;
how to choose the number of steps in each segment;
how to efficiently contract a higher tensor with several lower tensors;
the cost-error estimation and its comparison with ensemble or stochastic methods.

% dimension argument
In \cite{Ruesha}, there is a dimension argument to say the unstable contribution is typically small when the unstable dimension is small.
With the new $v$-divergence formula, we can give another estimation of the unstable contribution, which is controlled by the sup norm of $X$ and its derivatives.
In particular, the dimension argument is still useful.
In the unstable contribution, both $\div^v X$ and $\div^v f_*$ are contracted by $e$ and $\eps$, which have $u$ components.
It is still reasonable to expect that the unstable divergence is typically small when $u$ is small.
However, as we mentioned in \cite{Ruesha}, the caveat is that we could have $f$ such that $\div^v f_*$ is large even for $u\ll M$.
Similarly, we can have large $\div^vX$ when $f$ is intentionally designed according to $X$.
This could happen, say, close to the end of an optimization \cite{RepolhoCagliari2021}.

Last but not the least, we report that our current algorithm do \textit{not} converge for logistic maps such as $f(x)=3.8x(1-x)$.
Indeed, the logistic map doesn't have a linear response, but we can use this example to see a case when the algorithm fails, that is, the lack of enough uniformity in the expanding rate explodes the shadowing covector and hence our algorithm.
We do not exactly need uniform hyperbolicity, but we still need some control over the non-uniformity.
It was conjectured that high-dimensional physical systems are uniform enough \cite{gallavotti_chaotic_hypothesis_2006}, but mean-field models can offer counter examples \cite{Wormell2019,wormell22}.
Nevertheless, now we have a good solution for computing linear response of hyperbolic systems in discrete time.
Moreover, it seems that our formulas and algorithms for hyperbolic systems are basic building blocks for sampling non-hyperbolic linear responses by an orbit: this is indeed the case for continuous-time flows \cite{Ni_asl}.

\section*{Acknowledgements}
The author is very grateful to Caroline Wormell for helping on references.
This research is partially supported by the China Postdoctoral Science Foundation 2021TQ0016 and the International Postdoctoral Exchange Fellowship Program YJ20210018.

\section*{Data availability statement}
The code used in this manuscript is at \url{https://github.com/niangxiu/far}.
There is no other associated data.

\appendix
\section{Solving the least-squares version of the adjoint shadowing problem}
\label{a:schur}

The adjoint shadowing problem in this paper is different from \cite{Ni_nilsas}.
The orthogonal projection version in step~\ref{step:ni} of section~\ref{s:procedure} can be solved quite easily and we found no significant performance difference with other versions.
For readers more familiar with the previous least-squares version of this problem, we give the detailed formulas for the tridiagonal matrix algorithm, it is more complicated but might have better performance.

The nonintrusive adjoint shadowing has a least-squares version.
The least-squares idea was first used in the least-squares shadowing method, whose cost averages to somewhat more than $O(M^2)$ flop per step for sparse systems, depending on iterative solvers \cite{wang2014convergence}. 
The multiple shooting shadowing method samples the orbit by checkpoints; its average cost is $O(M^2)$ per step, and the storage management is better \cite{Blonigan_MSS}.
This cost should be similar to the stable part of the original blended response algorithm, whose systematic error is somewhat smaller than shadowing \cite{Abramov2008}.

The nonintrusive idea brings down the cost to $O(uM)$ flop per step plus $O(u^2M)$ per segment.
To keep using the least-squares, first compute and store the covariance matrix and the inner product,
  \[ \begin{split}
    C_\alpha := \sum_{n=1}^{N} \underline \eps_{\alpha,n}^T \underline \eps_{\alpha,n}, \quad
    d_\alpha := \sum_{n=1}^{N} \underline \eps_{\alpha,n}^T \nu'_{\alpha,n}, \quad
    \tilde d_\alpha := \sum_{n=1}^{N} \underline \eps_{\alpha,n}^T \tilde \nu'_{\alpha,n} dt .
  \end{split} \]
Note that computing $C$ as listed above would cost $O(u^2M)$ each step, so we should sample $C$ by a few steps in each segment.

The least-squares version of the nonintrusive adjoint shadowing solves
\[ \begin{split}
  \min \sum_n |\nu_n|^2 ,
  \quad  \mbox{where} \quad
  \nu =  \nu ' + \ueps a \,.
\end{split} \]
Using segments' data, we write the above as
\[\begin{split}
  &\min_{\{a_\alpha\}} 
  \sum_{\alpha=0}^{A-1} \frac 12 a_\alpha^T C_\alpha a_\alpha + d_\alpha^T a_\alpha \\
  \mbox{s.t. }& 
    a_{\al-1} = R^T_\al (a_{\al} + b_\al).
  \quad \alpha=1,\ldots,A-1.
\end{split}\]
The Lagrange function is:
\[ \begin{split}
  \sum_{\alpha=0}^{A-1} \frac 12 a_\alpha^T C_\alpha a_\alpha + d_\alpha^T a_\alpha 
  + \sum_{\alpha=1}^{A-1} \lambda_\al^T \left(a_{\al-1} 
  - R_\al^T (a_{\al} + b_{\al}) \right).
\end{split} \]
where $\lambda_\al$ is the Lagrange multiplier for the continuity condition at step $\al N$.
The minimizer is given by
\[ \begin{split}
  \mat{ C & B\\ B^T & 0 }  
  \mat{ a\\  \lambda } 
  =\mat{ -d \\ b' }\;,
  \quad \textnormal{where} 
\end{split} \]
\[ \begin{split}
  C = \mat{C_0 \\ &C_1 \\&& \ddots \\ &&&C_{A-1}}
  , \quad
  B = \mat{I \\ -R_1 &I \\&\ddots &\ddots \\ &&-R_{A-2} &I \\&&&-R_{A-1} }, \\
\end{split}\]
\[ \begin{split}
  a = \mat{a_0 \\ \vdots\\a_{A-1}}
  , \quad
  d = \mat{d_0 \\ \vdots\\d_{A-1}}
  , \quad
  \lambda = \mat{\lambda_1 \\ \vdots\\ \lambda_{A-1} }
  , \quad
  b' = \mat{R_1^Tb_1 \\ \vdots\\R_{A-1}^Tb_{A-1}}.
\end{split} \]
Here $\{C_\al\}_{\al=0}^{A-1}$, $\{R_\al\}_{\al=1}^{A-1} \subset \R^{u\times u}$;
$\{a_\al\}_{\al=0}^{A-1}$, $\{d_\al\}_{\al=0}^{A-1}$, $\{\lambda_\al\}_{\al=1}^{A-1}$, $\{b_\al\}_{\al=1}^{A-1}\subset \R^u$.
Note that $C_\al$ and $C$ are symmetric matrices.

We first solve the Schur complement for $\lambda$ \cite{Blonigan_MSS},
\[ \begin{split}
  B^T C^{-1} B \lambda = -( B^T C^{-1}d + b') .
\end{split} \]
We can write the left side in block form,
\[ \begin{split}
  C^{-1} B = \mat{C_0^{-1} \\ -D_1 &\ddots\\ &\ddots &C_{A-2}^{-1} \\&&-D_{A-1}}, 
  \Lambda := B^T C^{-1} B = \mat{E_1 & -D_1^T \\ -D_1 &E_2 &\ddots \\&\ddots &\ddots & -D_{A-2}^T \\ 
  &&-D_{A-2} &E_{A-1} }, 
\end{split} \]
where
\[ \begin{split}
  D_\al:= C_\al ^{-1} R_\al, \quad
  E_\al:= C_{\al-1}^{-1} + R_\al^T D_\al,  
  \quad \al=1,\cdots,A-1.
\end{split} \]
For the right side, denote
\[ \begin{split}
  y = \mat{y_1\\\vdots\\y_{A-1}}
  :=  -( B^T C^{-1}d + b'),
\end{split} \]
where
\[ \begin{split}
  y_\al = D_\al^{T}d_\al - C_{\al-1}^{-1}d_{\al-1} - R_\al^T b_\al ,
  \quad \al=1,\cdots,A-1.
\end{split} \]

The linear equation system $\Lambda \lambda= y$ is a block-tridiagonal matrix;
to solve it efficiently, we use the tridiagonal matrix algorithm,
whose cost is only $O(A)$.
First, we run a `forward chasing' to eliminate below-diagonal blocks in $\Lambda$,
\[ \begin{split}
  W:= D_{\al-1} E_{\al-1}^{-1},
  \quad \textnormal{} \quad 
  E_\al \leftarrow E_\al - W D_{\al-1}^T,
  \quad \textnormal{} \quad 
  y_\al \leftarrow y_\al + W y_{\al-1},\\
  \quad \textnormal{sequentially for} \quad 
  \al = 2,3,\cdots,A-1.
\end{split} \]
Here `$\leftarrow$' means to replace with new values.
Then we run a `backward chasing' to compute $\lambda_\al$.
\[ \begin{split}
&\lambda_{A-1} = E_{A-1}^{-1} y_{A-1};\\
&\lambda_{\al} = E_{\al}^{-1}(D_\al^T \lambda_{\al+1} +  y_{\al}),
  \quad \textnormal{sequentially for} \quad 
  \al = A-2,\cdots,1.
\end{split} \]

Then we solve for $a$ via
\[ \begin{split}
  a = - C^{-1}(B \lambda +d) .
\end{split} \]
More specifically,
\[ \begin{split}
  &a_0 = C_0^{-1}(-d_0-\lambda_1);\\
  &a_\al = C_\al^{-1}(-d_\al-\lambda_{\al+1}+R_\al \lambda_\al),\quad \al=2,\cdots A-2;\\
  &a_{A-1} = C_{A-1}^{-1}(-d_{A-1}+R_{A-1} \lambda_{A-1}).\\
\end{split} \]

The least-squares version of nonintrusive shadowing allows $u'>u$, where $u'$ is the actual number of homogeneous tangent solutions we compute.
But for the other part of the fast response algorithm, we need $u'=u$ anyway.

\bibliographystyle{abbrv}
{\footnotesize\bibliography{MyCollection}}

\begin{thebibliography}{10}

\bibitem{Abramov2008}
R.~V. Abramov and A.~J. Majda.
\newblock New approximations and tests of linear fluctuation-response for
  chaotic nonlinear forced-dissipative dynamical systems.
\newblock {\em Journal of Nonlinear Science}, 18:303--341, 2008.

\bibitem{Antown2021a}
F.~Antown, G.~Froyland, and O.~Junge.
\newblock Linear response for the dynamic laplacian and finite-time coherent
  sets.
\newblock {\em Nonlinearity}, 34:3337--3355, 2021.

\bibitem{Bahsoun2018}
W.~Bahsoun, S.~Galatolo, I.~Nisoli, and X.~Niu.
\newblock A rigorous computational approach to linear response.
\newblock {\em Nonlinearity}, 31:1073--1109, 2018.

\bibitem{Baladi2007}
V.~Baladi.
\newblock Linear response, or else.
\newblock {\em Proceedings of the International Congress of Mathematicians
  Seoul 2014}, pages 525--545, 2014.

\bibitem{BloniganPhdThesis}
P.~J. Blonigan.
\newblock Least squares shadowing for sensitivity analysis of large chaotic
  systems and fluid flows, 2016.

\bibitem{Blonigan_2017_adjoint_NILSS}
P.~J. Blonigan.
\newblock Adjoint sensitivity analysis of chaotic dynamical systems with
  non-intrusive least squares shadowing.
\newblock {\em Journal of Computational Physics}, 348:803--826, 2017.

\bibitem{Blonigan_MSS}
P.~J. Blonigan and Q.~Wang.
\newblock Multiple shooting shadowing for sensitivity analysis of chaotic
  dynamical systems.
\newblock {\em Journal of Computational Physics}, 354:447--475, 2018.

\bibitem{Bowen_shadowing}
R.~Bowen.
\newblock Markov partitions for axiom a diffeomorphisms.
\newblock {\em American Journal of Mathematics}, 92:725--747, 1970.

\bibitem{srbflow}
R.~Bowen and D.~Ruelle.
\newblock The ergodic theory of axiom a flows.
\newblock {\em Inventiones Mathematicae}, 29:181--202, 1975.

\bibitem{RepolhoCagliari2021}
L.~V.~R. Cagliari, J.~E. Hicken, and S.~Mishra.
\newblock Using the lss adjoint for simultaneous plant and control optimization
  of chaotic dynamical systems.
\newblock {\em Structural and Multidisciplinary Optimization}, 64:2361--2376,
  10 2021.

\bibitem{Chandramoorthy2021a}
N.~Chandramoorthy and M.~Jézéquel.
\newblock Rigorous justification for the space-split sensitivity algorithm to
  compute linear response in anosov systems.
\newblock {\em arxiv:2109.02750}, pages 1--33, 2021.

\bibitem{shadowingNonPhysical}
N.~Chandramoorthy and Q.~Wang.
\newblock On the probability of finding nonphysical solutions through
  shadowing.
\newblock {\em Journal of Computational Physics}, 440:110389, 9 2021.

\bibitem{Dolgopyat2004}
D.~Dolgopyat.
\newblock On differentiability of srb states for partially hyperbolic systems.
\newblock {\em Inventiones Mathematicae}, 155:389--449, 2004.

\bibitem{eyink2004ruelle}
G.~L. Eyink, T.~W.~N. Haine, and D.~J. Lea.
\newblock Ruelle's linear response formula, ensemble adjoint schemes and lévy
  flights.
\newblock {\em Nonlinearity}, 17:1867--1889, 2004.

\bibitem{Galatolo2014}
S.~Galatolo and I.~Nisoli.
\newblock An elementary approach to rigorous approximation of invariant
  measures.
\newblock {\em SIAM Journal on Applied Dynamical Systems}, 13:958--985, 2014.

\bibitem{Gallavotti1996}
G.~Gallavotti.
\newblock Chaotic hypothesis: Onsager reciprocity and fluctuation-dissipation
  theorem.
\newblock {\em Journal of Statistical Physics}, 84:899--925, 1996.

\bibitem{gallavotti_chaotic_hypothesis_2006}
G.~Gallavotti.
\newblock Entropy, thermostats, and chaotic hypothesis.
\newblock {\em Chaos 043114}, 16, 2006.

\bibitem{Gottwald2016}
G.~A. Gottwald, C.~Wormell, and J.~Wouters.
\newblock On spurious detection of linear response and misuse of the
  fluctuation–dissipation theorem in finite time series.
\newblock {\em Physica D: Nonlinear Phenomena}, 331:89--101, 9 2016.

\bibitem{Gouezel2006}
S.~Gouëzel and C.~Liverani.
\newblock Banach spaces adapted to anosov systems.
\newblock {\em Ergodic Theory and Dynamical Systems}, 26:189--217, 2006.

\bibitem{Gouezel2008}
S.~Gouëzel and C.~Liverani.
\newblock Compact locally maximal hyperbolic sets for smooth maps: Fine
  statistical properties.
\newblock {\em Journal of Differential Geometry}, 79:433--477, 2008.

\bibitem{resnet}
K.~He, X.~Zhang, S.~Ren, and J.~Sun.
\newblock Deep residual learning for image recognition.
\newblock pages 1--12, 2016.

\bibitem{Jiang06}
M.~Jiang and R.~de~la Llave.
\newblock Linear response function for coupled hyperbolic attractors.
\newblock {\em Communications in Mathematical Physics}, 261:379--404, 1 2006.

\bibitem{Lasagna2019}
D.~Lasagna, A.~Sharma, and J.~Meyers.
\newblock Periodic shadowing sensitivity analysis of chaotic systems.
\newblock {\em Journal of Computational Physics}, 391:119--141, 2019.

\bibitem{Lea2000}
D.~J. Lea, M.~R. Allen, and T.~W.~N. Haine.
\newblock Sensitivity analysis of the climate of a chaotic system.
\newblock {\em Tellus A: Dynamic Meteorology and Oceanography}, 52:523--532,
  2000.

\bibitem{lucarini_linear_response_climate}
V.~Lucarini, F.~Ragone, and F.~Lunkeit.
\newblock Predicting climate change using response theory: Global averages and
  spatial patterns.
\newblock {\em Journal of Statistical Physics}, 166:1036--1064, 2017.

\bibitem{Ni_adjoint_shadowing}
A.~Ni.
\newblock Adjoint shadowing directions in hyperbolic systems for sensitivity
  analysis.
\newblock {\em arXiv:1807.05568}, pages 1--23, 2018.

\bibitem{Ni_CLV_cylinder}
A.~Ni.
\newblock Hyperbolicity, shadowing directions and sensitivity analysis of a
  turbulent three-dimensional flow.
\newblock {\em Journal of Fluid Mechanics}, 863:644--669, 2019.

\bibitem{fr}
A.~Ni.
\newblock Fast differentiation of chaos.
\newblock {\em arXiv:2009.00595}, pages 1--28, 2020.

\bibitem{Ruesha}
A.~Ni.
\newblock Approximating linear response by nonintrusive shadowing algorithms.
\newblock {\em SIAM J. Numer. Anal.}, 59:2843–2865, 2021.

\bibitem{TrsfOprt}
A.~Ni.
\newblock Divergence formulas for sampling derivatives of transfer operators on
  an orbit.
\newblock {\em arXiv:2108.13863}, 8 2021.

\bibitem{Ni_asl}
A.~Ni.
\newblock Adjoint shadowing for backpropagation in hyperbolic chaos.
\newblock {\em arXiv:2207.06648}, 7 2022.

\bibitem{Ni_NILSS_AIAA_2016}
A.~Ni, P.~J. Blonigan, M.~Chater, Q.~Wang, and Z.~Zhang.
\newblock Sensitivity analysis on chaotic dynamical system by non-intrusive
  least square shadowing (ni-lss).
\newblock pages 1--16. American Institute of Aeronautics and Astronautics, 6
  2016.
\newblock doi:10.2514/6.2016-4399.

\bibitem{Ni_nilsas}
A.~Ni and C.~Talnikar.
\newblock Adjoint sensitivity analysis on chaotic dynamical systems by
  non-intrusive least squares adjoint shadowing (nilsas).
\newblock {\em Journal of Computational Physics}, 395:690--709, 2019.

\bibitem{Ni_NILSS_JCP}
A.~Ni and Q.~Wang.
\newblock Sensitivity analysis on chaotic dynamical systems by non-intrusive
  least squares shadowing (nilss).
\newblock {\em Journal of Computational Physics}, 347:56--77, 2017.

\bibitem{Ni_fdNILSS}
A.~Ni, Q.~Wang, P.~Fernandez, and C.~Talnikar.
\newblock Sensitivity analysis on chaotic dynamical systems by finite
  difference non-intrusive least squares shadowing (fd-nilss).
\newblock {\em Journal of Computational Physics}, 394:615--631, 2019.

\bibitem{clip_gradients2}
R.~Pascanu, T.~Mikolov, and Y.~Bengio.
\newblock On the difficulty of training recurrent neural networks.
\newblock {\em International conference on machine learning}, pages 1310--1318,
  2013.

\bibitem{Pilyugin_shadow_linear_formula}
S.~Y. Pilyugin.
\newblock Shadowing in structurally stable flows.
\newblock {\em Journal of Differential Equations}, 140:238--265, 1997.

\bibitem{srbmap}
D.~Ruelle.
\newblock A measure associated with axiom-a attractors.
\newblock {\em American Journal of Mathematics}, 98:619, 1976.

\bibitem{Ruelle_diff_maps}
D.~Ruelle.
\newblock Differentiation of srb states.
\newblock {\em Commun. Math. Phys}, 187:227--241, 1997.

\bibitem{Ruelle_diff_maps_erratum}
D.~Ruelle.
\newblock Differentiation of srb states: Correction and complements.
\newblock {\em Communications in Mathematical Physics}, 234:185--190, 2003.

\bibitem{Ruelle_diff_flow}
D.~Ruelle.
\newblock Differentiation of srb states for hyperbolic flows.
\newblock {\em Ergodic Theory and Dynamical Systems}, 28:613--631, 2008.

\bibitem{wang2014convergence}
Q.~Wang.
\newblock Convergence of the least squares shadowing method for computing
  derivative of ergodic averages.
\newblock {\em SIAM Journal on Numerical Analysis}, 52:156--170, 2014.

\bibitem{Wormell2019a}
C.~Wormell.
\newblock Spectral galerkin methods for transfer operators in uniformly
  expanding dynamics.
\newblock {\em Numerische Mathematik}, 142:421--463, 2019.

\bibitem{wormell22}
C.~L. Wormell.
\newblock Non-hyperbolicity at large scales of a high-dimensional chaotic
  system.
\newblock {\em Proceedings of the Royal Society A: Mathematical, Physical and
  Engineering Sciences}, 478, 5 2022.

\bibitem{Wormell2019}
C.~L. Wormell and G.~A. Gottwald.
\newblock Linear response for macroscopic observables in high-dimensional
  systems.
\newblock {\em Chaos}, 29, 2019.

\bibitem{young2002srb}
L.-S. Young.
\newblock What are srb measures, and which dynamical systems have them?
\newblock {\em Journal of Statistical Physics}, 108:733--754, 2002.

\end{thebibliography}

\end{document}